\newcommand {\C}{{\mathbb C}}
\newcommand {\ud}{\mathrm{d}}
\newcommand {\ue}{e}
\newcommand {\Ell}{L}
\newcommand {\F}{{\mathcal{F}}}
\newcommand {\ui}{i}
\newcommand {\ind}{{\mathbf{1}}}
\newcommand {\La}{\mathcal{L}}
\newcommand {\calL}{\mathcal{L}}
\newcommand {\N}{\mathbb N}
\newcommand {\norm}[1]{\left\|#1\right\|}
\newcommand {\R}{\mathbb R}
\newcommand {\Sw}{\mathcal{S}}
\newcommand {\w}{\omega}
\newcommand {\Z}{\mathbb Z}
\newcommand {\vanish}[1]{\relax}
\newcommand{\wh}{\widehat}
\DeclareMathOperator{\Real}{Re}
\newtheorem{theorem}{Theorem}[section]
\newtheorem{lemma}[theorem]{Lemma}
\newtheorem{proposition}[theorem]{Proposition}
\theoremstyle{definition}
\newtheorem{remark}[theorem]{Remark}
\numberwithin{equation}{section}
\protected\def\ignorethis#1\endignorethis{}
\let\endignorethis\relax
\DeclareFontFamily{U}{mathx}{}
\DeclareFontShape{U}{mathx}{m}{n}{<-> mathx10}{}
\DeclareSymbolFont{mathx}{U}{mathx}{m}{n}
\DeclareMathAccent{\widecheck}{0}{mathx}{"71}
\author{Chenxi Deng}
\address{School of Mathematics and Statistics\\
Beijing Institute of Technology \\ Beijing 100081\\China }
\email{chenxideng@bit.edu.cn}
\author{Jan Rozendaal}
\address{Institute of Mathematics, Polish Academy of Sciences\\
\'{S}niadeckich 8\\
00-656 Warsaw\\
Poland}
\email{jrozendaal@impan.pl}
\author{Mark Veraar}
\address{Delft Institute of Applied
Mathematics\\
Delft University of Technology\\
P.O.~Box 5031\\
2628 CD Delft\\
The Netherlands}
\email{M.C.Veraar@tudelft.nl}
\title{Improved polynomial decay for unbounded semigroups}
\keywords{$C_{0}$-semigroup, polynomial stability, Fourier multiplier%, Fourier type
}
\subjclass[2020]{Primary 47D06. Secondary 35B40, 42B37, 46B20}
\thanks{The first author is supported by China Scholarship Council (CSC). This research was funded in part by the National Science Center, Poland, grant 2021/43/D/ST1/00667. The second author is partially supported by NCN grant UMO-2023/49/B/ST1/01961. The third author is supported by the VICI subsidy VI.C.212.027 of the Netherlands Organization for Scientific Research (NWO)}
\begin{document}

\begin{abstract}
We obtain polynomial decay rates for $C_{0}$-semigroups, assuming 
that the resolvent grows polynomially at infinity in the complex right half-plane. Our results do not require the semigroup to be uniformly bounded, and for unbounded semigroups we improve upon previous results by, for example, removing a logarithmic loss on non-Hilbertian Banach spaces.
\end{abstract}

\maketitle

\section{Introduction}\label{sec:intro}

\subsection{Setting}\label{subsec:setting}

We study the asymptotic behavior of solutions to the abstract Cauchy problem
\begin{equation}\label{eq:ACP}
\begin{aligned}
\dot{u}(t)&=Au(t)\quad (t\geq0),\\
u(0)&=x,
\end{aligned}
\end{equation}
on a Banach space $X$. We assume that \eqref{eq:ACP} is well posed, so that the solution operators form a $C_{0}$-semigroup $(T(t))_{t\geq0}\subseteq\La(X)$ of bounded operators, with generator $A$. Throughout, we will consider $A$ satisfying $\overline{\C_{+}}\subseteq\rho(A)$, where $\C_{+}:=\{z\in\C\mid \Real(z)>0\}$ and $\rho(A):=\C\setminus\sigma(A)$ is the resolvent set of $A$. Under these assumptions, there are two well-known flavors of results that relate information about the resolvent operators $R(\lambda,A):=(\lambda-A)^{-1}$, $\lambda\in\rho(A)$, to asymptotic behavior of the semigroup orbits.

Firstly, the classical Gearhart--Huang--Pr\"{u}ss--Greiner theorem \cite{Gearhart78,Huang85,Pruss84} says that, if $X$ is a Hilbert space, then the semigroup $(T(t))_{t\geq0}$ is uniformly stable, and all orbits decay exponentially to zero, if and only if
\begin{equation}\label{eq:resunif}\sup_{\lambda\in\C_{+}}\|R(\lambda,A)\|_{\La(X)}<\infty.
\end{equation}
Versions of this theorem on non-Hilbertian Banach spaces were discovered later \cite{Weis95,Weis-Wrobel96,vanNeerven09}. Here an assumption such as \eqref{eq:resunif} typically guarantees exponential decay only for sufficiently smooth initial data, with the degree of smoothness depending on the geometry of the underlying Banach space. It is relevant to note that all these results make no a priori assumptions on the growth of the semigroup; only spectral information is required.

On the other hand, a more recent line of research considers the setting where the resolvent is not bounded on the right half-plane, but instead blows up along the imaginary axis at a specified rate. In this case the semigroup is not uniformly stable, and one can at best hope to obtain uniform decay rates for sufficiently smooth initial data. Semigroups with these properties arise naturally in the study of the damped wave equation
\begin{equation}\label{eq:damped}
\partial_{t}^{2}u(t,x)=\Delta_{g}u(t,x)-a(x)\partial_{t}u(t,x)\quad((t,x)\in \R\times M),
\end{equation}
on a Riemannian manifold $(M,g)$, where $a\in C(M)$ \cite{Ralston69,Rauch-Taylor74,Lebeau96,Burq98,Anantharaman-Leautaud14,ChSeTo20}. A succession of results in semigroup theory \cite{BaEnPrSc06,Batty-Duyckaerts08,Borichev-Tomilov10,BaChTo16,RoSeSt19} has elucidated the relationship between the rate of resolvent blowup and the rate of decay of classical solutions to \eqref{eq:ACP}, in the case where the semigroup is a priori assumed to be uniformly bounded. The latter assumption is in turn satisfied if the damping function $a$ in \eqref{eq:damped} is non-negative.

However, when considering functions $a$ in \eqref{eq:damped} that change sign, the associated semigroup need not be uniformly bounded and one may encounter unexpected spectral behavior (see e.g.~\cite{Renardy94,Rozendaal-Veraar18b}). 
Moreover, polynomially growing semigroups appear naturally in the analysis of Schr\"{o}dinger operators with unbounded potentials \cite{Davies-Simon91,Grigoryan06}, perturbed wave equations \cite{Goldstein-Wacker03,Paunonen14}, delay differential equations \cite{Sklyar-Polak17} and hyperbolic equations on non-Hilbertian Banach spaces \cite{Cordero-Nicola09,Rozendaal-Schippa23}.

Hence it is natural to wonder what can be said when one combines some of the difficulties of both the lines of research mentioned above, that is, if the semigroup $(T(t))_{t\geq0}$ is not uniformly bounded and the resolvent is not uniformly bounded on the right half-plane. This is the setting that will be considered in this article.

\subsection{Previous work}\label{subsec:previous}

Throughout, we suppose that $\C_{+}\subseteq\rho(A)$ and that there exist $\beta,C\geq 0$ such that
\[
\|R(\lambda,A)\|_{\La(X)} \leq C(1+|\lambda|)^{\beta}\quad(\lambda\in\C_{+}).
\]
It then follows that $\overline{\C_{+}}\subseteq\rho(A)$, but unless $\beta=0$, i.e.~unless \eqref{eq:resunif} holds, the resolvent might blow up along the imaginary axis, with polynomial rate at most $O(|\lambda|^{\beta})$. As in the work for uniformly bounded semigroups mentioned above, one hopes to derive polynomial rates of decay for semigroup orbits with sufficiently smooth initial data. 

In this regard, it was first shown in \cite{BaEnPrSc06} that, on general Banach spaces, for each $\rho\geq 0$ and $\tau>(\rho+1)\beta+1$ one has
\begin{equation}\label{eq:previous}
\|T(t)x\|_{X}\lesssim t^{-\rho}\|x\|_{D((-A)^{\tau})}
\end{equation}
for all $t\geq 1$ and $x\in D((-A)^{\tau})$. 
Later, \cite{Rozendaal-Veraar18a} improved this estimate under additional geometric assumptions on the underlying Banach space. Namely, if $X$ has Fourier type $p\in[1,2]$ (see Section \ref{subsec:Fourier}), then \eqref{eq:previous} holds for each $\tau>(\rho+1)\beta+\tfrac{1}{p}-\tfrac{1}{p'}$. Moreover, if $p=2$, i.e.~if $X$ is a Hilbert space, then one may let $\tau=(\rho+1)\beta$. However, it was left as an open question whether one may also let $\tau=(\rho+1)\beta+\tfrac{1}{p}-\tfrac{1}{p'}$ for Banach spaces with Fourier type $p\in[1,2)$ (see also \cite[Appendix B]{Rozendaal23}).

Recently, it was shown in \cite{Santana-Carvalho24} that the results from \cite{Rozendaal-Veraar18a} regarding \eqref{eq:previous} can in fact be improved. More precisely, for each $\rho>0$ and $\sigma>\tfrac{1}{p}-\tfrac{1}{p'}$ 
one has
\begin{equation}\label{eq:previous2}
\|T(t)x\|_{X}\lesssim t^{-\rho}\log(t)^{\sigma}\|x\|_{D((-A)^{\tau})}
\end{equation}
for $t\geq 2$ and $x\in D((-A)^{\tau})$, where $\tau=(\rho+1)\beta+\tfrac{1}{p}-\tfrac{1}{p'}$. That is, for $\rho>0$ and $p\in[1,2)$, \eqref{eq:previous2} attains the missing endpoint exponent from \cite{Rozendaal-Veraar18a}, up to a logarithmic loss. In fact, \cite{Santana-Carvalho24} combined methods from \cite{Rozendaal-Veraar18a} with ones from the theory for bounded semigroups in \cite{BaChTo16} and considered resolvents with more general growth behavior, but specializing to polynomially growing resolvents leads to \eqref{eq:previous2}.

Finally, it is important to emphasize that the results from \cite{Rozendaal-Veraar18a} and \cite{Santana-Carvalho24} are far from optimal if the semigroup $(T(t))_{t\geq0}$ is uniformly bounded. Indeed, in this case \cite{Batty-Duyckaerts08} yields, on general Banach spaces and for all $\rho\geq 0$,
\begin{equation}\label{eq:BD}
\|T(t)x\|_{X}\lesssim t^{-\rho}\log(t)^{\rho}\|x\|_{D((-A)^{\tau})}
\end{equation}
for $t\geq 2$ and $x\in D((-A)^{\tau})$, where $\tau=\rho\beta$. Moreover, by \cite{Borichev-Tomilov10}, if $X$ is a Hilbert space then the logarithmic factor in \eqref{eq:BD} can be removed, yielding \eqref{eq:previous} for $\tau=\rho\beta$. 
On the other hand, for unbounded semigroups on Hilbert spaces and for $\rho=0$ one cannot in general expect to obtain \eqref{eq:previous} for $\tau<(\rho+1)\beta$, as follows from an example of Wrobel (see \cite[Example 4.1]{Wrobel89} and \cite[Example 4.20]{Rozendaal-Veraar18a}). We also refer to \cite[Section 4.7.1]{Rozendaal-Veraar18a} for an application to polynomially growing semigroups of the combination of \eqref{eq:BD} and a rescaling argument.

\subsection{Main result}\label{subsec:main}

For $\tau>0$ and $q\in[1,\infty]$, we will work with the real interpolation space $D_{A}(\tau,q):=(X,D(A^{m}))_{\tau/m,q}$, where $m\in\N$ with $m>\tau$ is arbitrary (see also \eqref{eq:interspace}). Moreover, we refer to \eqref{eq:HLtype} and \eqref{eq:HLcotype} for the definitions of Hardy--Littlewood type and Hardy--Littlewood cotype, respectively. The following is our main result.

\begin{theorem}\label{thm:main}
Let $A$ be the generator of a $C_{0}$-semigroup $(T(t))_{t\geq0}$ on a Banach space $X$ with Fourier type $p\in[1,2]$. Suppose that $\C_{+}\subseteq\rho(A)$, and that there exist 
$\beta>0$ and 
$C\geq0$ such that
\begin{equation}
\label{eq:mainas}
\|R(\lambda,A)\|_{\La(X)} \leq C(1+|\lambda|)^{\beta}\quad(\lambda\in\C_{+}).
\end{equation}
Let $\rho\geq0$ and set $\tau:=(\rho+1)\beta+\tfrac{1}{p}-\tfrac{1}{p'}$. Then there exists a $C_{\rho}\geq0$ such that \begin{equation}\label{eq:main}
\|T(t)x\|_{X}\leq C_{\rho}t^{-\rho}\|x\|_{D_A(\tau,p)}
\end{equation}
for all $t\geq1$ and $x\in D_{A}(\tau,p)$.
If $\rho>0$, then \eqref{eq:main} also holds with $D_A(\tau,p)$ replaced by $D_A(\tau,q)$ for any $q\in [1, \infty]$, or by $D((-A)^{\tau})$.

Suppose, additionally, that $p>1$ and that $X$ has Hardy--Littlewood type $p$ or Hardy--Littlewood cotype $p'$. Then, for $\rho=0$, \eqref{eq:main} also holds with $D_{A}(\tau,p)$ replaced by $D((-A)^{\tau})$.
\end{theorem}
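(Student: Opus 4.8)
The plan is to reduce the decay estimate to a Fourier-multiplier bound, following the general philosophy of \cite{Rozendaal-Veraar18a}. Consider the resolvent $R(\cdot,A)$ on the closed right half-plane. The key link is that, for suitable $x\in D((-A)^{\tau})$, the orbit $T(t)x$ can be recovered (via a contour shift and the inverse Laplace/Fourier transform) as something like $t^{-\rho}$ times a Fourier integral whose symbol is built from $(-A)^{-\tau}$ composed with derivatives of $s\mapsto R(1+is,A)$. The resolvent growth assumption \eqref{eq:mainas} together with the resolvent identity (which controls derivatives $\frac{d^{k}}{ds^{k}}R(1+is,A) = (-1)^{k}k!\,R(1+is,A)^{k+1}$, hence $\|R(1+is,A)^{k+1}\|\lesssim (1+|s|)^{(k+1)\beta}$) shows that after dividing by $(1+|s|)^{\tau}$ the symbol and its first derivative are bounded, i.e.\ the symbol lies in a Mikhlin-type or Besov-type class adapted to Fourier type $p$. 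One then invokes the Fourier-type Mihlin multiplier theorem (Hörmander--Mihlin on spaces with Fourier type $p$, which costs exactly $\frac{1}{p}-\frac{1}{p'}$ derivatives) to conclude boundedness on $L^{p}(\R;X)$, and a transference/localization argument converts this into the pointwise bound \eqref{eq:main}. The appearance of the real interpolation space $D_{A}(\tau,p)$ in the first part is natural: the $L^{p}$-valued multiplier estimate, when specialized via a Littlewood--Paley decomposition in the spectral parameter, self-improves to act on the interpolation space with the matching microlocal exponent $p$. The case $\rho>0$ is softer because one has room to spare: a gain of $(\rho+1)\beta$ smoothness in the symbol translates into extra decay $t^{-\rho}$ plus slack that absorbs the difference between $D_{A}(\tau,q)$ and $D_{A}(\tau,p)$ or $D((-A)^{\tau})$, since $D((-A)^{\tau'})\hookrightarrow D_{A}(\tau,1)\hookrightarrow D_{A}(\tau,p)$ for $\tau'>\tau$ and one can afford $\tau'$ slightly larger.

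The delicate part is the \emph{last} statement, the endpoint $\rho=0$ with $D((-A)^{\tau})$ in place of $D_{A}(\tau,p)$, under the extra hypothesis of Hardy--Littlewood type $p$ or cotype $p'$. Here there is no slack: one genuinely needs the sharp multiplier bound to hold on the smaller domain $D((-A)^{\tau})$ rather than the interpolation space, and the only reason to expect this is that Hardy--Littlewood type/cotype is precisely the refinement of Fourier type that upgrades the relevant square-function (or maximal-function) estimate so that no loss of interpolation parameter occurs. Concretely, I would argue as follows. Decompose the contour integral dyadically: $T(t)x = \sum_{j\geq 0} T_{j}(t)x$ where $T_{j}$ corresponds to the piece of the symbol supported on $|s|\sim 2^{j}$. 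On each dyadic block the Fourier-type multiplier theorem gives $\|T_{j}(t)x\|_{X} \lesssim 2^{-j(\beta+1/p-1/p')} \cdot 2^{j\tau}\|x_{j}\|_{X} = \|x_{j}\|_{X}$ uniformly in $t$ (for $\rho=0$), where $x_{j}$ is the spectral block $\psi_{j}(-A)(-A)^{\tau}x$, roughly. Summing these blocks in $j$ is exactly where $\|x\|_{D_{A}(\tau,p)} = \|(2^{j\tau}\|x_{j}\|_{X})_{j}\|_{\ell^{p}}$ appears, so the naive argument only gives the interpolation-space bound. To reach $D((-A)^{\tau})$ one must instead estimate $\sum_{j}\|T_{j}(t)x\|_{X}$ by something controlled by $\|(-A)^{\tau}x\|_{X}$ directly; Hardy--Littlewood (co)type is the hypothesis ensuring that the vector-valued sequence of spectral blocks satisfies a Hardy--Littlewood-type inequality of the shape $\sum_{j} 2^{-j\varepsilon}\|\psi_{j}(-A)y\|_{X}\lesssim \|y\|_{X}$ — or rather its correct dual/maximal incarnation — with $y=(-A)^{\tau}x$, which closes the gap. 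I would set this up by writing the full operator as a single Fourier multiplier with symbol in a suitable $R$-bounded or Hardy--Littlewood-type-valued class and quote the corresponding extension of the Mihlin theorem.

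The \textbf{main obstacle} I anticipate is making the endpoint argument clean: one has to carefully track how the Hardy--Littlewood type/cotype hypothesis enters the multiplier estimate without reintroducing a logarithmic loss (the loss that \eqref{eq:previous2} suffers), and to verify that the symbol — which involves powers $R(1+is,A)^{k}$ of the resolvent rather than the resolvent itself — actually belongs to the right class. The resolvent-power bounds $\|R(1+is,A)^{k}\|\lesssim (1+|s|)^{k\beta}$ are too crude if used term-by-term for $k$ large; instead I would work with just enough derivatives (essentially one, or a fractional amount matching $1/p-1/p'$) and use the Hardy--Littlewood structure to handle the summation, which is exactly the step where the $\rho=0$ endpoint is won or lost. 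Establishing the precise form of the Hörmander--Mihlin theorem on spaces with Hardy--Littlewood type $p$ or cotype $p'$ — presumably recorded in the sections referenced by \eqref{eq:HLtype} and \eqref{eq:HLcotype} — and checking its hypotheses against our concrete symbol is where most of the real work lies; the contour-shift and reduction-to-multiplier steps are by now standard.
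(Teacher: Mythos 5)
Your sketch does not close the two places where the theorem is actually won, and the concrete mechanisms you propose contain errors. For the first part, your central claim is that after dividing by $(1+|s|)^{\tau}$ the symbol and its first derivative are bounded, so that a Mikhlin/Besov-class multiplier theorem adapted to Fourier type $p$ applies. This fails: differentiating the resolvent raises its power, and each extra power costs another factor $(1+|s|)^{\beta}$, so the derivative of your normalized symbol is only $O\big((1+|s|)^{\beta-(\frac1p-\frac1{p'})}\big)$, which is unbounded as soon as $\beta>\frac1p-\frac1{p'}$. Any multiplier theorem that charges for symbol smoothness therefore loses at least an extra $\beta$ in $\tau$ --- precisely the loss the theorem is designed to remove. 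The paper's route is different: it uses Proposition \ref{prop:multFourtype}, which requires only $\sup_{\xi}\|m(\xi)\|<\infty$ but maps the Besov space $B^{1/p-1/p'}_{p,p}(\R;Y)$ into $L^{p'}(\R;X)$; the $\frac1p-\frac1{p'}$ of smoothness is placed on the \emph{data}, via Proposition \ref{prop:interpsemigroup} (the truncated orbit $t\mapsto \ind_{(0,\infty)}(t)e^{-\w t}T(t)x$ lies in $B^{s}_{p,p}$ when $x\in D_A(\gamma+s,p)$), while the resolvent growth is cancelled \emph{exactly} by taking $Y=D_A((\rho+1)\beta,p)$, for which Proposition \ref{prop:resdecay} gives uniform bounds on all powers $R(i\cdot,A)^k$. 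The pointwise bound in $t$ then comes from the representation $\widehat{g}=n!R(i\cdot,A)^{n+1}x$ together with the $L^q\to L^\infty$ upgrade of Proposition \ref{prop:qtoinf} (which needs consecutive powers $n-1,n,n+1$), not from a transference argument; and the justification that $t^{n}T(t)x\in L^1$ for a dense set of data --- the only point where the polynomial bound on all of $\C_+$ is used --- is missing from your sketch. Your treatment of $\rho>0$ is also too weak: taking $\tau'$ slightly larger than $\tau$ proves a statement with a strictly smaller data space, whereas the claimed result (same $\tau$, spaces $D_A(\tau,q)$ for all $q$, and $D((-A)^{\tau})\subseteq D_A(\tau,\infty)$) follows in the paper by real interpolation between two integer values of $\rho$ and reiteration.

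For the endpoint $\rho=0$ on $D((-A)^{\tau})$, the mechanism you assign to Hardy--Littlewood type/cotype is not the right one. The inequality you write, $\sum_j 2^{-j\varepsilon}\|\psi_j(-A)y\|_X\lesssim\|y\|_X$, holds trivially for any sectorial operator (uniform boundedness of the blocks plus geometric summability) and so cannot encode the hypothesis; moreover at the endpoint there is no $2^{-j\varepsilon}$ to spare, and no square-function or Littlewood--Paley estimates for $A$ are available under the standing assumptions (no bounded $H^\infty$-calculus or BIP), so one cannot pass between $\|(-A)^{\tau}x\|_X$ and $\ell^q$-sums of spectral blocks at all. In the paper, Hardy--Littlewood type/cotype is used as a \emph{weighted mapping property of the Fourier transform in the time variable}: since the fractional domain over-corrects the growth, $\|R(i\xi,A)^{k}\|_{\La(D((-A)^{\tau}),X)}\lesssim(1+|\xi|)^{-(\frac1p-\frac1{p'})}$, and Proposition \ref{prop:multiplierHLtype}, with the power weights $w_{p-2}$ (HL type) or $w_{p'-2}$ (HL cotype), converts exactly this symbol decay into boundedness of $T_{R(i\cdot,A)^{k}}\colon L^{p}(\R;D((-A)^{\tau}))\to L^{p'}(\R;X)$, after which the abstract theorem of Rozendaal--Veraar concludes. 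The absence of a logarithmic loss is thus won by the exact compensation between the $|\xi|^{-(1/p-1/p')}$ decay of the symbol and the weight in the Hardy--Littlewood inequality, not by a summation over spectral blocks; as it stands, this step of your proposal is a conjecture rather than a proof.
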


The first two statements of Theorem \ref{thm:main} are contained in the main text as Theorem \ref{thm:polinter}, while the last statement is Theorem \ref{thm:polydecayHL}.

Given that any Banach space has Fourier type $p=1$, the first part of Theorem \ref{thm:main} applies to general Banach spaces.  For $p\in(1,2]$, the assumptions on $X$ in Theorem \ref{thm:main} are satisfied in particular if $X$ is isomorphic to a closed subspace of $L^{r}(\Omega)$, for $\Omega$ a measure space and $r=p$ or $r=p'$ (see Section \ref{subsec:Fourier}). 

For $p\in[1,2)$, the first part of Theorem \ref{thm:main} improves \eqref{eq:previous2} by removing the logarithmic factor for $\rho>0$, and it yields an endpoint result for $\rho=0$. 
The second part of Theorem \ref{thm:main} in turn fully extends \eqref{eq:previous} to $\rho=0$ and $\tau=\beta+\tfrac{1}{p}-\tfrac{1}{p'}$, under additional geometric assumptions. Also note that, for all $p\in[1,2]$ and $\rho>0$, \eqref{eq:main} involves a larger space of initial data than considered in \cite{Rozendaal-Veraar18a} and \cite{Santana-Carvalho24}, since $D((-A)^{\tau})\subseteq D_{A}(\tau,\infty)$. On the other hand, for $\rho=0$, \eqref{eq:main} complements the main result of \cite{Rozendaal-Veraar18a} on Hilbert spaces, since in general one neither has $D((-A)^{\tau})\subseteq D_{A}(\tau,2)$ nor $D_{A}(\tau,2)\subseteq D((-A)^{\tau})$.

The exponent $\tau$ in Theorem \ref{thm:main} is sharp for $p=2$ and $\rho=0$, as noted above, and for general $p\in[1,2]$ as $\beta\to0$, as follows from a modification of an example of Arendt concerning exponential stability (see \cite[Example 5.1.11]{ArBaHiNe11} and \cite[Section 4]{Weis-Wrobel96}). We do not know whether, for a general Banach space with Fourier type $p\in[1,2)$ and for $\rho=0$, \eqref{eq:main} also holds with $D_{A}(\tau,p)$ replaced by $D((-A)^{\tau})$.

For any $C_{0}$-semigroup $(T(t))_{t\geq0}$ there exists an $\w\in\R$ such that
\begin{equation}\label{eq:semigroupgrowth}
\|T(t)\|_{\La(X)}\lesssim e^{\w t}\quad(t\geq0).
\end{equation}
As already noted, only the case $\w>0$ will be of interest in this article. However, it follows from \eqref{eq:semigroupgrowth} that \eqref{eq:mainas} holds whenever $\Real(\lambda)\geq\w_{0}$, for $\w_{0}>\w$. Moreover, \eqref{eq:mainas} directly extends to $\lambda\in i\R$ as well.
Hence \eqref{eq:mainas} is in fact an assumption on the growth of the resolvent as $\lambda$ tends to infinity in the strip $\{\lambda\in\C\mid 0\leq  \Real(\lambda)\leq \w_{0}\}$.  

One may weaken assumption \eqref{eq:mainas} somewhat, by requiring instead that
\begin{equation}\label{eq:mainweak1}
\|R(\lambda,A)\|_{\La(X)} \lesssim (1+|\lambda|)^{\beta_{0}}\quad(\lambda\in\C_{+})
\end{equation}
for \emph{some} $\beta_{0}>0$, and that
\[
\|R(i\xi,A)\|_{\La(X)} \lesssim (1+|\xi|)^{\beta}\quad(\xi\in\R).
\]
Then the conclusion of Theorem \ref{thm:main} still holds, and the specific value of $\beta_{0}$ in \eqref{eq:mainweak1} plays no role. Indeed, the place in the proof of Theorem \ref{thm:main} where one genuinely uses polynomial resolvent bounds for $\lambda\in\C_{+}$ is in the proofs of Theorems \ref{thm:abstractpol} and \ref{thm:polydecayHL}, to obtain a dense subset of initial values for which the semigroup orbits are integrable,
and there the value of $\beta_{0}$ is irrelevant. Instead, as in the theory for uniformly bounded semigroups, to obtain concrete rates of decay we work with the behavior of the resolvent on the imaginary axis. 

As in \cite{Rozendaal-Veraar18a,Santana-Carvalho24}, our techniques in principle also allow for $A$ to have a singularity at zero. More precisely, one could suppose that \eqref{eq:mainas} holds for $|\lambda|\geq1$, and that there exists an $\alpha>0$ such that $\|R(\lambda,A)\|_{\La(X)} \lesssim |\lambda|^{-\alpha}$ for $|\lambda|<1$. In this case one has to assume additionally that $-A$ is an injective sectorial operator (see Remark \ref{rem:secauto}), and the initial values have to be restricted to the range of a suitable fractional power of $-A$. For simplicity, we will not consider such a setting in this article.

\subsection{The strategy of the proof}

Our approach is similar to that in \cite{Rozendaal-Veraar18a} (see also \cite{Rozendaal23}), applying Fourier multiplier theory to the resolvent on the imaginary axis. However, whereas \cite{Rozendaal-Veraar18a} mostly involved Fourier multipliers from $L^{p}(\R;Y)$ to $L^{q}(\R;X)$ for suitable $1\leq p\leq q\leq \infty$ and $Y\subseteq X$, in the present article we proceed differently.

Namely, the first part of Theorem \ref{thm:main} is proved using  Proposition \ref{prop:multFourtype}, which considers multipliers between the Besov space $B^{s}_{p,p}(\R;Y)$ and $L^{p'}(\R;X)$, for suitable values of $p$ and $s$. Working with such multipliers allows us to obtain endpoint estimates. In turn, Besov spaces are intimately connected to the real interpolation method, and in Proposition \ref{prop:resdecay} we show that real interpolation spaces can also be used effectively to cancel out resolvent growth, as is required to satisfy the conditions of our Fourier multiplier theorems. This somewhat different approach also necessitates other changes to the setup from \cite{Rozendaal-Veraar18a}.

On the other hand, for the second part of Theorem \ref{thm:main} we consider Fourier multipliers between weighted spaces $L^{p}(\R,w;Y)$ and $L^{q}(\R,v;X)$, for suitable weights $w$ and $v$. This setting allows us to obtain endpoint results involving fractional domains, at the cost of having to make a priori assumptions about the mapping properties of the Fourier transform between such weighted spaces.

\subsection{Organization}\label{subsec:organize}

In Section \ref{sec:preliminaries} we collect some preliminaries on the vector-valued Fourier transform, vector-valued Besov spaces, and interpolation spaces associated with semigroup generators. In Section \ref{sec:stabinter} we then prove the first part of Theorem \ref{thm:main}, and in Section \ref{sec:stabfrac} we prove the final statement in Theorem \ref{thm:main}.

\subsection{Notation and terminology}\label{subsec:notation}

The natural numbers are $\N=\{1,2,\ldots\}$, and $\N_{0}:=\N\cup\{0\}$. We write $\C_{+}:=\{z\in\C\mid \Real(z)>0\}$ for the open complex right half-plane.

For $p\in[1,\infty]$ and $w:\R\to [0,\infty)$ measurable, we denote by $L^{p}(\R,w;X)$ the Bochner space of equivalence classes of strongly measurable, $p$-integrable, $X$-valued functions on $\R$ with respect to the weight $w$, endowed with the norm
\[
\|f\|_{L^{p}(\R,w;X)}:=\Big(\int_{\R}\|f(x)\|_{X}^{p}w(x)\ud x\Big)^{1/p}
\]
for $f\in L^{p}(\R,w;X)$. We simply denote this space by $L^{p}(\R;X)$ when $w\equiv 1$. For $\gamma\in\R$, the weight $w_{\gamma}:\R\to[0,\infty)$ is given by
\begin{equation}\label{eq:weight}
w_{\gamma}(x):=|x|^{\gamma}\quad(x\in\R).
\end{equation}
The H\"{o}lder conjugate $p'\in[1,\infty]$ of $p\in[1,\infty]$ is defined by $1=\tfrac{1}{p}+\tfrac{1}{p'}$. We write $\ind_{S}$ for the indicator function of a set $S$.

The space of bounded operators between complex Banach spaces $X$ and $Y$ is $\La(X,Y)$, and $\La(X):=\La(X,X)$. 
The domain of a closed operator $A$ on $X$ is $D(A)$. 

We use the notation $f(s)\lesssim g(s)$ 
to indicate that $f(s)\leq Cg(s)$ for all $s$ and a constant $C\geq0$ independent of $s$, and similarly for $f(s)\gtrsim g(s)$ and $f(s)\eqsim g(s)$.
\section{Preliminaries}\label{sec:preliminaries}

In this section we first collect some basic definitions involving the vector-valued Fourier transform, and then we introduce Besov spaces and state two results which will be needed to prove the first half of Theorem \ref{thm:main}. Finally, we collect background on interpolation spaces and we prove two key results about them.

\subsection{The Fourier transform}\label{subsec:Fourier}

Let $X$ be a Banach space. The class of $X$-valued Schwartz functions on $\R$ is denoted by $\Sw(\R;X)$, and the space of $X$-valued tempered distributions by $\Sw'(\R;X)$. The Fourier transform of $f\in\Sw'(\R;X)$ is denoted by $\F f$ or $\widehat{f}$. If $f\in\Ell^{1}(\R;X)$ then
\[
\F f(\xi)=\int_{\R}\ue^{-\ui \xi t}f(t)\,\ud t\quad (\xi\in\R).
\]
One says that $X$ has \emph{Fourier type} $p\in[1,2]$ if $\F:L^p(\R;X)\to L^{p'}(\R;X)$ is bounded. 
Every Banach space $X$ has Fourier type $1$, and $X$ has Fourier type $2$ if and only if $X$ is isomorphic to a Hilbert space (see \cite{Kwapien72}).

We say that $X$ has \emph{Hardy--Littlewood type} $p\in(1,2]$ if
\begin{equation}\label{eq:HLtype}
\F:L^{p}(\R;X)\to L^{p}(\R,w_{p-2};X)
\end{equation}
is bounded, where $w_{\gamma}$ is defined in \eqref{eq:weight} for $\gamma\in\R$. Moreover, $X$ has \emph{Hardy--Littlewood cotype} $q\in[2,\infty)$ if
\begin{equation}\label{eq:HLcotype}
\F:L^{q}(\R,w_{q-2};X)\to L^{q}(\R;X)
\end{equation}
is bounded. Note that, if $X=\C$, then \eqref{eq:HLtype} is the Hardy--Littlewood inequality. In the latter case, and in fact for any Hilbert space $X$, \eqref{eq:HLtype} holds for all $p\in(1,2]$, and \eqref{eq:HLcotype} for all $q\in[2,\infty)$.

If $X$ has Fourier type $p_{0}\in(1,2]$, then $X$ has Hardy--Littlewood type $p$ for all $p\in(1,p_{0})$, and Hardy--Littlewood cotype for all $q\in(p_{0}',\infty)$ (see {\cite[Proposition 3.5]{Dominguez-Veraar21}). Also, if $X$ is a Banach lattice which is $p$-convex and $p$-concave with $p\in(1,\infty)$, then $X$ has Fourier type $p$ and Hardy--Littlewood type $p$ if $p\leq 2$, and Fourier type $p'$ and Hardy--Littlewood cotype $p$ if $p\geq 2$ (see \cite[Proposition 2.2]{GaToKa96} and \cite[Proposition 6.9]{Dominguez-Veraar21}). This holds in particular if $X$ is isomorphic to a closed subspace of $L^{p}(\Omega)$, for $\Omega$ any measure space. For more on the relation between the notions of Fourier type, Hardy--Littlewood (co)type, and convexity and concavity in Banach lattices, we refer to \cite{Dominguez-Veraar21}.

Let $Y$ be a Banach space and $m:\R\to \La(Y,X)$. We say that $m$ is \emph{$X$-strongly measurable} if $\xi\mapsto m(\xi)y$ is a strongly measurable $X$-valued map for every $y\in Y$. In this article we will consider $m$ which have the additional property that there exist $\alpha,C_{\alpha}\geq 0$ 
such that $\|m(\xi)\|_{\La(Y,X)}\leq C_{\alpha}(1+|\xi|)^{\alpha}$ for all $\xi\in\R$. In this case, we may set
\[
T_{m}f := \mathcal{F}^{-1}(m\widehat{f}\,)\quad (f\in  \mathcal{S}(\R;X)).
\]
Then  $T_m: \mathcal{S}(\R;X)\to  \mathcal{S}'(\R;Y)$ is the \emph{Fourier multiplier operator} with \emph{symbol} $m$.

\subsection{Besov spaces} \label{subsec:Besov}

Throughout this article, fix an inhomogeneous Littlewood--Paley sequence $(\phi_k)_{k\in\N_{0}}\subseteq C^{\infty}_{c}(\R)$. That is, one has $\phi_{1}(\xi)=0$ if $|\xi|\notin [1/2,2]$, $\phi_{k}(\xi)=\phi_{1}(2^{-k+1}\xi)$ for each $k>1$ and $\xi\in\R$, and
\[
\sum_{k=0}^{\infty}\phi_{k}(\xi)=1\quad(\xi\in\R).
\]
Let $p,q\in[1, \infty]$ and $s\in \R$. Then the Besov space $B^s_{p,q}(\R;X)$
consists of all $f \in \mathcal{S}'(\R;X)$ such that $\F^{-1}(\phi_k)* f\in L^p(\R;X)$ for each $k \ge 0$, and such that
\[
\|f\|_{B^s_{p,q}(\R;X)}:=\|(2^{ks}\F^{-1}(\phi_k)* f)_{k\ge 0}\|_{\ell^q(L^p(\R;X))}<\infty.
\]
Then $\mathcal{S}(\R;X)\subseteq B^s_{p,q}(\R;X)$
, by \cite[Proposition 14.4.3]{HyNeVeWe23}. Moreover, if $p,q<\infty$, then $\mathcal{S}(\R;X)$ is a dense subset of $B^s_{p,q}(\R;X)$. Finally, we will use the simple observation that
\begin{equation}\label{eq:Besemb1}
B^{s}_{p,q}(\R;X)\subseteq B^{r}_{p,1}(\R;X)
\end{equation}
for all $p,q\in[1,\infty]$ and $s,r\in\R$ with $s>r$, and that
\begin{equation}\label{eq:Besemb2}
B^0_{p,1}(\R;X)\subseteq L^p(\R;X)\subseteq B^0_{p,\infty}(\R;X)
\end{equation}
for all $p\in[1,\infty]$ ({see e.g.}~\cite[Proposition 14.4.18]{HyNeVeWe23}).

The following lemma will be used in the proof of Proposition \ref{prop:interpsemigroup}.

\begin{lemma}\label{lem:pointwise}
Let $p\in [1, \infty)$, $q\in [1, \infty]$ and $s\in(0,1/p)$. Then there exists a $C\geq0$ such that $\ind_{(0,\infty)} f\in B^{s}_{p,q}(\R;X)$ for all $f\in B^{s}_{p,q}(\R;X)$, and
\[
\|\ind_{(0,\infty)}f\|_{B^{s}_{p,q}(\R;X)}\leq C \|f\|_{B^{s}_{p,q}(\R;X)}.
\]
\end{lemma}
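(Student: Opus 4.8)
The goal is to show that the operator $f\mapsto\ind_{(0,\infty)}f$ is bounded on $B^{s}_{p,q}(\R;X)$ when $0<s<1/p$. The plan is to reduce this to a well-known scalar fact: the characteristic function $\ind_{(0,\infty)}$ is a pointwise multiplier on $B^{s}_{p,q}(\R)$ precisely in the range $0<s<1/p$ (this is classical, going back to work on function spaces with the paraproduct/Fefferman--Stein machinery; see e.g.\ Triebel's books). The vector-valued analogue is obtained by the same argument, so I would first recall the structure of the proof and then indicate how the $X$-valued version goes through verbatim.

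\medskip

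\noindent\textbf{Step 1: Paraproduct decomposition.} Write $g=\ind_{(0,\infty)}$ and decompose the product $gf$ using a Bony-type paraproduct into three pieces: $\Pi_{1}(g,f)=\sum_{j}S_{j-1}g\,\Delta_{j}f$, $\Pi_{2}(g,f)=\sum_{j}\Delta_{j}g\,S_{j-1}f$, and the ``diagonal'' term $\Pi_{3}(g,f)=\sum_{|j-k|\le 1}\Delta_{j}g\,\Delta_{k}f$, where $\Delta_{j}=\F^{-1}(\phi_{j})*\,\cdot\,$ and $S_{j}=\sum_{k\le j}\Delta_{k}$. Since $g\in L^{\infty}\subseteq B^{0}_{\infty,\infty}$ (indeed $g\in B^{1/p}_{p,\infty}$ near the jump, but $L^\infty\cap \dot B^{s}_{\infty,\infty}$ membership for $s<0$ suffices for the relevant terms), the first paraproduct $\Pi_{1}(g,f)$ is controlled in $B^{s}_{p,q}(\R;X)$ by $\|g\|_{L^{\infty}}\|f\|_{B^{s}_{p,q}(\R;X)}$ for all $s\in\R$; this is the easy term and uses only the almost-orthogonality of the Littlewood--Paley blocks together with a scalar $\ell^{q}$ estimate, all of which are insensitive to the Banach space $X$.

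\medskip

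\noindent\textbf{Step 2: The terms involving $\Delta_{j}g$.} The remaining two paraproducts require quantitative smoothness of $g=\ind_{(0,\infty)}$. One has the elementary bound $\|\Delta_{j}g\|_{L^{r}(\R)}\lesssim 2^{-j/r'}=2^{-j(1-1/r)}$ for $1\le r\le\infty$ and $j\ge 1$ (compute directly: $\F^{-1}\phi_{j}$ is an $L^{1}$-normalized bump at scale $2^{-j}$, and convolving with $\ind_{(0,\infty)}$ produces a function that is essentially supported at distance $\lesssim 2^{-j}$ from the origin with size $O(1)$, plus rapidly decaying tails). For $\Pi_{2}(g,f)$: estimate $\|\Delta_{j}g\,S_{j-1}f\|_{L^{p}(\R;X)}$ by Hölder, putting $\Delta_j g$ in $L^\infty$ and $S_{j-1}f$ in $L^p(\R;X)$; using $\|S_{j-1}f\|_{L^{p}(\R;X)}\lesssim 2^{j\cdot 0}\cdot(\text{something summable against }2^{-js})$ via the embedding $B^{s}_{p,q}\hookrightarrow B^{0}_{p,1}$ for $s>0$ from \eqref{eq:Besemb1}, one gets a geometric gain as long as $s>0$, which is exactly the lower constraint. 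For $\Pi_{3}(g,f)$: here the block $\Delta_{\ell}(\Delta_{j}g\,\Delta_{k}f)$ with $|j-k|\le 1$ and $\ell\le j+O(1)$ is summed over $j\gtrsim\ell$; estimating $\|\Delta_{j}g\|_{L^{\infty}}\lesssim 1$ and $\|\Delta_{k}f\|_{L^{p}(\R;X)}=2^{-ks}(2^{ks}\|\Delta_k f\|_{L^p})$, and summing $\sum_{j\ge\ell}2^{\ell\cdot 0}\cdot 2^{-js}$, one needs the high-frequency part to reassemble in $B^{s}_{p,q}$; the relevant obstruction is that this requires $s<1/p$, because the ``worst'' estimate for $\Delta_j g$ that can be used without losing summability is $\|\Delta_j g\|_{L^p}\lesssim 2^{-j/p'}=2^{-j(1-1/p)}$, and then $s-(1-1/p)\cdot(\text{effective})<0$ forces $s<1/p$. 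This is the classical endpoint and is the step I expect to be the main obstacle — getting the bookkeeping of exponents right so that both the $s>0$ and $s<1/p$ constraints emerge cleanly, and checking that the borderline Hölder pairing $\tfrac1p=\tfrac1p+\tfrac1\infty$ is the one being used.

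\medskip

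\noindent\textbf{Step 3: Vectorization and conclusion.} The key observation is that throughout Steps 1--2 the Banach space $X$ enters only through norms $\|\Delta_{k}f(\cdot)\|_{X}$, never through any geometric property of $X$: the paraproduct blocks $\Delta_{j}g$ are \emph{scalar} (since $g$ is scalar), so all the multiplications $\Delta_{j}g\cdot\Delta_{k}f$ are just pointwise scalar multiples of $X$-valued functions, and $\|\Delta_{j}g(x)\Delta_{k}f(x)\|_{X}=|\Delta_{j}g(x)|\,\|\Delta_{k}f(x)\|_{X}$. Hence every scalar estimate above transfers directly to the $X$-valued setting by replacing $|{\cdot}|$ with $\|{\cdot}\|_{X}$ inside the integrals, and the $\ell^{q}$-summation over Littlewood--Paley indices is unchanged. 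Assembling the three paraproduct estimates gives $\|\ind_{(0,\infty)}f\|_{B^{s}_{p,q}(\R;X)}\le C\|f\|_{B^{s}_{p,q}(\R;X)}$ with $C$ depending only on $p,q,s$ (and the fixed Littlewood--Paley sequence), which is the claim. As an alternative to running the paraproduct argument from scratch, one could instead cite the scalar result $\ind_{(0,\infty)}\in M(B^{s}_{p,q}(\R))$ for $0<s<1/p$ from Triebel's monograph and then invoke a general vector-valued extension principle; but since the argument is short and self-contained, writing it out directly via Bony decomposition is cleaner for the paper.
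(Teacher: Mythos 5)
Your overall architecture (Bony decomposition of $\ind_{(0,\infty)}f$ into $\Pi_1,\Pi_2,\Pi_3$, plus the observation in Step 3 that the multiplier is scalar, so every estimate vectorizes with no geometric input from $X$) is reasonable, and it is a genuinely different route from the paper, whose proof consists of citing \cite[Corollary 14.6.35]{HyNeVeWe23} for $p>1$ and observing that the proof of that result also allows $p=1$ once $s\in(0,1/p)$. However, the quantitative core of your Step 2 does not close as written. The key estimate is stated with the wrong exponent: for $j\geq1$ one has $\Delta_j\ind_{(0,\infty)}(x)=\Psi(2^{j}x)$ up to normalization, with $\Psi$ bounded and rapidly decaying, hence $\|\Delta_j\ind_{(0,\infty)}\|_{L^r(\R)}\lesssim 2^{-j/r}$, not $2^{-j/r'}$; your formula already fails at $r=\infty$ (correct bound $O(1)$, not $O(2^{-j})$) and at $r=1$ (correct bound $O(2^{-j})$, not $O(1)$).

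More seriously, your treatment of $\Pi_2$ produces no gain at all: pairing $\Delta_j g$ in $L^\infty$ (size $O(1)$, no decay) with $\|S_{j-1}f\|_{L^p(\R;X)}\lesssim\|f\|_{B^0_{p,1}(\R;X)}$ gives a bound uniform in $j$, so $2^{js}\|\Delta_j g\,S_{j-1}f\|_{L^p(\R;X)}$ is not summable for any $s>0$; the claimed ``geometric gain as long as $s>0$'' does not exist with this H\"older pairing. The standard way to close this term is the opposite pairing: put $\Delta_j g$ in $L^p$ (gaining $2^{-j/p}$) and control $\|S_{j-1}f\|_{L^\infty(\R;X)}$ by Bernstein, $\|\Delta_k f\|_{L^\infty(\R;X)}\lesssim 2^{k/p}\|\Delta_k f\|_{L^p(\R;X)}$, which yields the convolution kernel $2^{-(j-k)(1/p-s)}$ for $k<j$ and is precisely where the restriction $s<1/p$ enters. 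Correspondingly, your attribution of the two constraints is reversed: the resonance term $\Pi_3$ needs only $s>0$ (your own estimates $\|\Delta_j g\|_{L^\infty}\lesssim1$ and the sum $\sum_{j\geq\ell}2^{(\ell-j)s}$ already give this), and the purported derivation of $s<1/p$ there from the incorrect bound $\|\Delta_j g\|_{L^p}\lesssim 2^{-j/p'}$ would in any case produce the wrong threshold. So the direct paraproduct approach can certainly be made to work and would give a self-contained alternative to the paper's citation, but with the estimates arranged as you have them the argument has a genuine gap exactly at the step you flagged as the main obstacle.
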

\begin{proof}
For $p>1$, the statement in fact holds for $s\in(-1/p,1/p)$, as is shown in \cite[Corollary 14.6.35]{HyNeVeWe23}. In the proof of the latter result, one can see that for $s\in(0,1/p)$ one may also allow $p=1$.
\end{proof}

Finally, the following Fourier multiplier result, \cite[Proposition 14.5.7]{HyNeVeWe23}, is one of the key ingredients in the proof of the first half of Theorem \ref{thm:main}.

\begin{proposition}\label{prop:multFourtype}
Let $X$ and $Y$ be Banach spaces with Fourier type $p\in[1,2]$, and let $m:\R\to\La(Y,X)$ be $X$-strongly measurable, with $\sup_{\xi\in\R}\|m(\xi)\|_{\La(Y,X)}<\infty$. Then
\[
T_{m}:B^{1/p-1/p'}_{p,p}(\R;Y)\to L^{p'}(\R;X)
\]
is bounded.
\end{proposition}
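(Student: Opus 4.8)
\textbf{Plan of proof for Lemma \ref{lem:pointwise}.}
The strategy is to reduce to the scalar-valued, Triebel--Lizorkin-adjacent statement in \cite[Corollary 14.6.35]{HyNeVeWe23}, and then to inspect its proof to see that the endpoint $p=1$ is permissible in the restricted range $s\in(0,1/p)$. First I would recall the structure of the cited result: for $p\in(1,\infty)$ and $|s|<1/p$, multiplication by $\ind_{(0,\infty)}$ (equivalently, the operator $f\mapsto \ind_{(0,\infty)}f$) is bounded on $B^{s}_{p,q}(\R;X)$ with a constant depending only on $p,q,s$. The proof there decomposes $\ind_{(0,\infty)}f$ via a paraproduct-type argument into three pieces: a "low-high" term where the Littlewood--Paley blocks of $f$ are paired against coarse frequency parts of $\ind_{(0,\infty)}$, a "high-low" term, and a "diagonal" term. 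The low-high term is harmless for all $p\in[1,\infty]$ because it only involves a bounded Fourier multiplier on each dyadic block. The delicate terms are the ones that see the actual singularity of $\widehat{\ind_{(0,\infty)}}$ at the origin, and there one uses that $\ind_{(0,\infty)}$, or rather its relevant Littlewood--Paley pieces, lie in $B^{1/p}_{p,\infty}$ — more precisely one exploits $\|\ind_{(0,\infty)}\|$ measured in a homogeneous Besov norm of order $1/p$, which is exactly the borderline making $s<1/p$ necessary.

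The key point to verify is that, when $s>0$ (so that the constraint is one-sided, $0<s<1/p$, rather than the symmetric $|s|<1/p$), the argument no longer requires the Hardy-space/duality input that forces $p>1$. Concretely, for $s>0$ one can estimate the dangerous pieces of $\ind_{(0,\infty)}f$ using only: (i) the pointwise bound $|\ind_{(0,\infty)}(t)|\le 1$, which controls the $L^p$ norm of products; (ii) Young's inequality for the convolutions $\F^{-1}(\phi_k)*(\ind_{(0,\infty)}f)$, valid for all $p\in[1,\infty]$; and (iii) the summability $\sum_k 2^{ks}2^{-k/p}<\infty$ coming from $s<1/p$, which absorbs the frequency-localization loss of order $2^{k/p}$ incurred near the jump. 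None of (i)--(iii) uses reflexivity or the boundedness of the Hilbert transform, so $p=1$ is admissible. I would phrase this as: "tracking the dependence on $p$ in the proof of \cite[Corollary 14.6.35]{HyNeVeWe23}, one sees that for $s\in(0,1/p)$ the argument only invokes Young's inequality and the pointwise bound $|\ind_{(0,\infty)}|\le 1$, both valid for $p=1$, so the restriction $p>1$ can be dropped." Alternatively, one may give a short direct proof for $p=1$: write $\F^{-1}(\phi_k)*(\ind_{(0,\infty)}f)$ and split the convolution kernel of $\F^{-1}(\phi_k)$ into its values on $(-2^{-k},2^{-k})$ and outside; the far part sees $f$ essentially at its own scale and contributes $\lesssim \|\F^{-1}(\phi_j)*f\|_{L^1}$ for $|j-k|\le 2$, while the near part, combined with the cancellation $\int \F^{-1}(\phi_k)=\phi_k(0)=0$ for $k\ge1$, gives a gain of $2^{-k}$ times a derivative, compensated by the $2^{k/p}=2^k$ loss of differentiating $\ind_{(0,\infty)}f$ across the jump — here the condition $s<1/p=1$ is what makes the resulting geometric series converge.

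The main obstacle I expect is purely bookkeeping: the cited proof is written for $p>1$ and interleaves the two-sided range $|s|<1/p$ with duality arguments, so one must carefully isolate which estimates are genuinely needed for the one-sided range $s>0$ and confirm that the $p=1$ case of each surviving estimate holds with a constant independent of the $X$-valued data (the vector-valued nature is otherwise inert here, since all the harmonic analysis is on the scalar side and $X$ enters only through $\|\cdot\|_X$ inside the $L^p$ norms). Since the statement as given only claims the restricted range $s\in(0,1/p)$ and merely asserts that "in the proof one can see" $p=1$ is allowed, the write-up should be short: cite \cite[Corollary 14.6.35]{HyNeVeWe23} for $p>1$, then add one paragraph checking that the proof's use of $p>1$ is confined to estimates that remain valid at $p=1$ once $s$ is taken positive, invoking Young's inequality and the trivial pointwise bound in place of any duality or maximal-function input.
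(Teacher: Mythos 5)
The statement you were asked to prove is Proposition~\ref{prop:multFourtype}, a vector-valued Fourier multiplier theorem: under a Fourier type $p$ assumption on $X$ and $Y$ and a uniform bound on the operator-valued symbol $m$, the operator $T_m=\F^{-1}(m\,\widehat{\cdot}\,)$ maps $B^{1/p-1/p'}_{p,p}(\R;Y)$ boundedly into $L^{p'}(\R;X)$. Your write-up, however, opens with ``Plan of proof for Lemma~\ref{lem:pointwise}'' and then discusses, at length, boundedness of pointwise multiplication by $\ind_{(0,\infty)}$ on $B^{s}_{p,q}(\R;X)$ for $s\in(0,1/p)$, tracking the $p=1$ endpoint in \cite[Corollary 14.6.35]{HyNeVeWe23}. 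That is an entirely different statement in the paper (Lemma~\ref{lem:pointwise}), concerning a \emph{physical-side} multiplier $\ind_{(0,\infty)}$ rather than a \emph{Fourier-side} operator-valued symbol $m$, and with a Besov-to-Besov (not Besov-to-Lebesgue) conclusion. Nothing in your argument engages with the hypotheses actually in play for Proposition~\ref{prop:multFourtype} (Fourier type of $X$ and $Y$, boundedness of $m$, the exponent $1/p-1/p'$, the target $L^{p'}$).

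For the record, the paper does not prove Proposition~\ref{prop:multFourtype} at all; it is quoted verbatim from \cite[Proposition 14.5.7]{HyNeVeWe23}. A self-contained argument would decompose $f$ into its Littlewood--Paley blocks $f_k=\F^{-1}(\phi_k)*f$, apply the Fourier type $p$ of $Y$ and of $X$ together with the boundedness of $m$ on each dyadic annulus to get $\norm{\F^{-1}(m\,\phi_k\widehat f\,)}_{L^{p'}(\R;X)}\lesssim 2^{k(1/p-1/p')}\norm{f_k}_{L^p(\R;Y)}$ (the factor $2^{k(1/p-1/p')}$ coming from the dilation-scaling of the $L^p\to L^{p'}$ Hausdorff--Young constant on a frequency interval of length $\sim 2^k$), and then sum the pieces in $L^{p'}$; obtaining the $\ell^p$ rather than the trivial $\ell^1$ summation is precisely the refinement supplied in \cite{HyNeVeWe23}. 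None of this resembles the paraproduct/jump-point analysis for $\ind_{(0,\infty)}$ that you outlined. You should discard the current proposal and either simply cite \cite[Proposition 14.5.7]{HyNeVeWe23}, as the paper does, or write out the dyadic argument above, addressing how the $\ell^p$ summation is achieved.
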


\subsection{Interpolation spaces}\label{subsec:inter}

Let $A$ be a linear operator on a Banach space $X$.
For $\omega\in (0,\pi)$, set $S_{\omega}:=\{z\in\C\setminus\{0\}\mid |\arg(z)|<\w\}$.
Then $-A$ is a \emph{sectorial operator} if there exists an $\omega\in(0,\pi)$ such that $\sigma(-A)\subseteq \overline{S_{\omega}}$, and
\begin{equation}\label{eq:sectorial}
\sup \{\|\lambda (\lambda+A)^{-1}\|_{\mathcal{L}(X)}\,|\,\lambda\in \C\backslash\overline{S_{\omega'}}\}<\infty
\end{equation}
for each $\omega'\in (\omega,\pi)$. If $-A$ is a sectorial operator, then 
the fractional power $(-A)^{\alpha}$ is well defined for each $\alpha\in\C_{+}$, cf.~\cite[Chapter 3]{Haase06a}. If, additionally, $A$ is injective, then $(-A)^{\alpha}$ is well defined for all $\alpha\in\C$. Note that $D((-A)^{\beta})\subseteq D((-A)^{\alpha})$ whenever $\beta\in\C$ satisfies $\Real(\beta)>\Real(\alpha)$.

\begin{remark}\label{rem:secauto}
Throughout this article, as in Theorem \ref{thm:main}, we will consider $C_{0}$-semigroups $(T(t))_{t\geq0}$ with generator $A$ such that $\C_{+}\subseteq\rho(A)$ and
\begin{equation}\label{eq:secauto}
\|R(\lambda,A)\|_{\La(X)} \leq C(1+|\lambda|)^{\beta}\quad(\lambda\in\C_{+}),
\end{equation}
where $\beta,C\geq0$ are independent of $\lambda$. Under these assumptions, $-A$ is a sectorial operator of angle $\pi/2$. Indeed, the semigroup generation property implies that $\|R(\lambda,A)\|_{\La(X)}\lesssim 1/\Real(\lambda)$ for $\Real(\lambda)$ large (as follows from \eqref{eq:semigroupgrowth}), which gives a uniform bound in \eqref{eq:sectorial} for $
|\lambda|$ large if $\w'>\pi/2$. On the other hand, \eqref{eq:secauto} implies that $\overline{\C_{+}}\subseteq \rho(A)$, which in turn yields the required bound in \eqref{eq:sectorial} for $|\lambda|$ small.
\end{remark}

Let $-A$ be a sectorial operator on a Banach space $X$, and let $\tau\in(0,\infty)$ and $q\in [1,\infty]$. Then the real interpolation space associated with $A$, $\tau$ and $q$ is
\begin{equation}\label{eq:interspace}
D_A(\tau,q):=(X, D((-A)^{\alpha}))_{\tau/\alpha,q},
\end{equation}
where $\alpha\in(\tau,\infty)$ is arbitrary. It follows from reiteration that $D_A(\tau,q)$ is independent of the choice of $\alpha$. In particular, one has
\[
D_A(\tau,q)=(X, D((-A)^{m}))_{\tau/m,q}=(X, D(A^{m}))_{\tau/m,q}
\]
whenever $m\in\N$ satisfies $m>\tau$. By basic properties of interpolation spaces, $D_{A}(\tau,q)\subseteq D_{A}(\sigma,r)$ if $\sigma<\tau$, or if $\sigma= \tau$ and $r\geq q$. By \cite[Corollary 6.6.3]{Haase06a},
\begin{equation}\label{eq:interfrac}
D_A(\tau,1)\subseteq D((-A)^{\tau})\subseteq D_A(\tau,\infty)
\end{equation}
for all $\tau>0$. Also, $D((-A)^{\alpha})$ is a dense subset of $D_A(\tau,q)$ for all $\alpha>\tau$ and $q<\infty$, by \cite[Theorem 6.6.1]{Haase06a}.

Finally, if $X$ has Fourier type $p\in[1,2]$ and $A$ is injective, then both $D((-A)^{\tau})$ and $D_A(\tau,p)$ also have Fourier type $p$, for all $\tau>0$. Indeed, 
$(-A)^{\tau}:D((-A)^{\tau})\to X$ is an isomorphism, while for $D_A(\tau,p)$ the statement follows from \eqref{eq:interspace} by interpolation (see also \cite[Proposition 2.4.17]{HyNeVeWe16}).

The following proposition, connecting interpolation spaces to the Besov spaces from the previous subsection, will play a key role in the proof of part of Theorem \ref{thm:abstractpol}.

\begin{proposition}\label{prop:interpsemigroup}
Let $A$ be the generator of a $C_{0}$-semigroup $(T(t))_{t\geq 0}$ on a Banach space $X$, and suppose that $-A$ is a sectorial operator. Let $M\geq0$ and $\w\in\R$ be such that $\|T(t)\|_{\La(X)} \leq Me^{(\omega-1) t}$ for all $t\geq 0$, and let $p\in [1, \infty)$, $q\in [1, \infty]$ and $s\in (0,1/p)$. Then there exists a constant $C\geq0$ such that $[t\mapsto \ind_{(0,\infty)}(t)e^{-\omega t}T(t) x]\in B^s_{p,q}(\R;X)$ for all $x\in D_A(s,q)$, with
\[
\|[t\mapsto \ind_{(0,\infty)}(t)e^{-\omega t}T(t) x]\|_{B^s_{p,q}(\R;X)}\leq C\|x\|_{D_A(s,q)}.
\]
\end{proposition}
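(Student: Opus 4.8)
The plan is to reduce everything to two ingredients already recorded in the excerpt: Lemma~\ref{lem:pointwise} on the boundedness of pointwise multiplication by $\ind_{(0,\infty)}$ on $B^{s}_{p,q}(\R;X)$, and the identifications $D_{A}(s,q)=(X,D(A))_{s,q}$ (legitimate since $s<1/p\le 1$, by the discussion after \eqref{eq:interspace}) together with the standard realization of Besov spaces as real interpolation spaces, $(L^{p}(\R;X),\mathrm{W}^{1,p}(\R;X))_{s,q}=B^{s}_{p,q}(\R;X)$ for $s\in(0,1)$ (see \cite[Chapter~14]{HyNeVeWe23}).

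First I would replace the one-sided orbit by its \emph{symmetric extension} $g_{x}(t):=e^{-\omega|t|}T(|t|)x$ for $t\in\R$. This function agrees with $t\mapsto e^{-\omega t}T(t)x$ on $(0,\infty)$, is continuous on all of $\R$ because $T(0)x=x$, and satisfies $\|g_{x}(t)\|_{X}\le Me^{-|t|}\|x\|_{X}$ by the assumed growth bound; hence $x\mapsto g_{x}$ is bounded from $X$ into $L^{p}(\R;X)$, with norm bounded independently of $p\in[1,\infty)$. Next, for $x\in D(A)$ I would differentiate the orbit on $(0,\infty)$ and on $(-\infty,0)$ separately, obtaining $g_{x}'(t)=\mathrm{sgn}(t)\,e^{-\omega|t|}T(|t|)(A-\omega)x$ for $t\neq 0$, with $\|g_{x}'(t)\|_{X}\le Me^{-|t|}\|(A-\omega)x\|_{X}$. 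Since $g_{x}$ has no jump at the origin, an integration by parts against test functions, in which the boundary terms at $0$ cancel \emph{precisely because of continuity}, shows that this pointwise derivative is the distributional derivative; thus $g_{x}\in\mathrm{W}^{1,p}(\R;X)$ with $\|g_{x}\|_{\mathrm{W}^{1,p}(\R;X)}\lesssim\|x\|_{D(A)}$.

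With these two bounds in hand, real interpolation applied to the single map $x\mapsto g_{x}$ (using $s\in(0,1)$) yields boundedness $D_{A}(s,q)=(X,D(A))_{s,q}\to(L^{p}(\R;X),\mathrm{W}^{1,p}(\R;X))_{s,q}=B^{s}_{p,q}(\R;X)$. Finally, since $\ind_{(0,\infty)}(t)e^{-\omega t}T(t)x=\ind_{(0,\infty)}\,g_{x}$ almost everywhere, Lemma~\ref{lem:pointwise} (this is where the hypotheses $s<1/p$ and $p<\infty$ are used) gives
\[
\big\|[t\mapsto\ind_{(0,\infty)}(t)e^{-\omega t}T(t)x]\big\|_{B^{s}_{p,q}(\R;X)}\lesssim\|g_{x}\|_{B^{s}_{p,q}(\R;X)}\lesssim\|x\|_{D_{A}(s,q)},
\]
which is the assertion.

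The main obstacle, and the reason the statement is confined to $s<1/p$, is that one cannot interpolate the map $x\mapsto[\ind_{(0,\infty)}(t)e^{-\omega t}T(t)x]$ directly: for $x\in D(A)$ this function jumps at the origin and therefore does not lie in $\mathrm{W}^{1,p}(\R;X)$. Passing to the symmetric extension removes the jump and restores $\mathrm{W}^{1,p}$-regularity on $D(A)$, while Lemma~\ref{lem:pointwise}, whose proof reflects that the Heaviside function belongs to $B^{s}_{p,q}$ exactly when $s<1/p$, absorbs the cost of restricting back to the half-line. A secondary point needing care is verifying that the pointwise derivative of $g_{x}$ is genuinely its distributional derivative (the boundary contributions at $0$ must cancel), and checking throughout that the interpolation identities are invoked only with $s$ in the admissible range $(0,1)$.
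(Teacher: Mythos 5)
Your proposal is correct and follows essentially the same route as the paper: both define the reflected orbit $t\mapsto e^{-\omega|t|}T(|t|)x$, verify its boundedness from $X$ to $L^{p}(\R;X)$ and from $D(A)$ to $\mathrm{W}^{1,p}(\R;X)$, interpolate to get boundedness $D_{A}(s,q)\to B^{s}_{p,q}(\R;X)$, and finish with Lemma \ref{lem:pointwise}. The only difference is cosmetic: you spell out the distributional-derivative verification and the identity $(L^{p},\mathrm{W}^{1,p})_{s,q}=B^{s}_{p,q}$, which the paper handles by citing \cite[Theorem 14.4.31]{HyNeVeWe23}.
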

\begin{proof}
Let $J:X\to L^p(\R;X)$ be the bounded linear operator given by $Jx(t) :=  e^{-\omega |t|}T(|t|) x$, for $x\in X$ and $t\in\R$. Since $(Jx)'(t) =-\text{sign}(t) e^{-\omega |t|}T(|t|) (\omega-A)x$ for $x\in D(A)$ and $t\neq0$, the restricted operator $J:D(A)\to W^{1,p}(\R;X)$ is bounded. Real interpolation (see \cite[Theorem 14.4.31]{HyNeVeWe23}) shows that $J:D_A(s,q)\to B^{s}_{p,q}(\R;X)$ is bounded as well. Now the proof is concluded by applying Lemma \ref{lem:pointwise}.
\end{proof}

In turn, the following proposition will be crucial for the proof of Theorem \ref{thm:main}.

\begin{proposition}\label{prop:resdecay}
Let $A$ be be the generator of a $C_{0}$-semigroup $(T(t))_{t\geq 0}$ on a Banach space $X$. Suppose that $\C_{+}\subseteq\rho(A)$, 
and that there exist $\beta>0$ and $C\geq0$ such that
\begin{equation}\label{eq:resdecay}
\|R(\lambda,A)\|_{\La(X)} \leq C(1+|\lambda|)^{\beta}\quad(\lambda\in\C_{+}).
\end{equation}
Then $i\R\subseteq\rho(A)$, and
\[
\sup\{\|R(i\xi,A)^{k}\|_{\La(D_A((n+1)\beta,q),X)}\mid \xi\in \R, k\in\{0,\ldots, n+1\}\}<\infty
\]
for all $n\in\N_{0}$ and $q\in [1,\infty]$.
\end{proposition}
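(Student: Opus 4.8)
The plan is to first establish that $i\R\subseteq\rho(A)$, and then to prove the uniform bound by interpolating an estimate on $X$ (which is trivial, being just polynomial resolvent growth) against an estimate on $D((-A)^{(n+1)\beta})$ (where the fractional power precisely cancels the growth). The statement $i\R\subseteq\rho(A)$ follows from \eqref{eq:resdecay} exactly as in Remark \ref{rem:secauto}: the bound $\|R(\lambda,A)\|_{\La(X)}\leq C(1+|\lambda|)^{\beta}$ on $\C_{+}$ forces the resolvent to stay bounded as $\Real(\lambda)\downarrow 0$ along any vertical line, so $i\R$ cannot meet $\sigma(A)$; hence $\overline{\C_{+}}\subseteq\rho(A)$ and the bound \eqref{eq:resdecay} extends by continuity to $\lambda\in i\R$.

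For the main estimate, fix $n\in\N_{0}$ and $k\in\{0,\dots,n+1\}$. First I would record the endpoint on $X$: by the resolvent identity $\|R(i\xi,A)^{k}\|_{\La(X)}\lesssim (1+|\xi|)^{k\beta}\leq (1+|\xi|)^{(n+1)\beta}$ for all $\xi\in\R$. Next, the endpoint on the fractional domain: I claim $R(i\xi,A)^{k}:D((-A)^{(n+1)\beta})\to X$ is uniformly bounded in $\xi$. The point is that $(-A)^{(n+1)\beta}=(-A)^{k\beta}(-A)^{(n+1-k)\beta}$ commutes with the resolvent, so for $x\in D((-A)^{(n+1)\beta})$ one can write $R(i\xi,A)^{k}x$ in a form that trades the $k$ powers of the resolvent against $(-A)^{k\beta}$; concretely, using moment inequalities / the fact that $(-A)^{k\beta}R(i\xi,A)^{k}$ is bounded on $X$ with norm $\lesssim (1+|\xi|)^{k\beta}\cdot(1+|\xi|)^{-k\beta}\eqsim 1$ — this is the standard estimate for sectorial operators that $\|(-A)^{\theta}R(\lambda,A)\|\lesssim |\lambda|^{\theta-1}$-type bounds, here applied with the resolvent on $i\R$ where $\|R(i\xi,A)\|_{\La(X)}\lesssim(1+|\xi|)^{\beta}$. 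Since $-A$ is sectorial of angle $\pi/2$ (Remark \ref{rem:secauto}), the imaginary axis minus a neighborhood of $0$ lies in a sector of angle $<\pi$, and standard functional calculus gives $\|(-A)^{\theta}R(i\xi,A)\|_{\La(X)}\lesssim (1+|\xi|)^{\beta\theta}$ for $\theta\in[0,1]$, hence by iterating $\|(-A)^{k\beta}R(i\xi,A)^{k}\|_{\La(X)}\lesssim (1+|\xi|)^{0}\eqsim 1$; near $0$ the resolvent is simply bounded. Composing with the isomorphism $(-A)^{-k\beta}:X\to D((-A)^{k\beta})\supseteq D((-A)^{(n+1)\beta})$ then yields the claimed uniform bound on $D((-A)^{(n+1)\beta})$.

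Now I would interpolate. Pick $m\in\N$ with $m>(n+1)\beta$. We have the two uniform families of bounded operators $R(i\xi,A)^{k}:X\to X$ with norm $\lesssim(1+|\xi|)^{(n+1)\beta}$ and $R(i\xi,A)^{k}:D((-A)^{m})\to X$; by \eqref{eq:interfrac}, $D((-A)^{m})\subseteq D_{A}(m,q)$, but I actually want the reverse direction, so instead I interpolate between $X$ and $D((-A)^{m})$: real interpolation with parameters $\theta=(n+1)\beta/m$ and exponent $q$ gives $R(i\xi,A)^{k}:(X,D((-A)^{m}))_{\theta,q}=D_{A}((n+1)\beta,q)\to (X,X)_{\theta,q}=X$ bounded, with norm controlled by $(1+|\xi|)^{(1-\theta)\cdot(n+1)\beta}\cdot(1+|\xi|)^{\theta\cdot 0}=(1+|\xi|)^{(1-\theta)(n+1)\beta}$. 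Wait — that still grows; the fix is that one must use the fractional-domain endpoint $D((-A)^{(n+1)\beta})$ itself (where the bound is $\eqsim 1$) together with the $X$-endpoint, and then invoke $D_{A}((n+1)\beta,q)\subseteq D_{A}((n+1)\beta,\infty)\supseteq D((-A)^{(n+1)\beta})$ only in the sense that makes the chain work: concretely, since $D_{A}((n+1)\beta,q)$ for $q<\infty$ sits strictly inside $D((-A)^{(n+1)\beta+\eps})$ for small $\eps>0$ is false in general, the clean route is to interpolate the two endpoints $X$ (growth $(1+|\xi|)^{(n+1)\beta}$) and $D((-A)^{(n+1-\eta)\beta})$ — no. The correct and simplest argument: interpolate $R(i\xi,A)^{k}$ between $D((-A)^{n\beta+\beta})\to X$ bounded uniformly and — actually, since $(-A)^{(n+1)\beta}$ exactly cancels all $n+1$ available resolvent powers, and we only use $k\le n+1$ of them, the bound $\|R(i\xi,A)^{k}\|_{\La(D((-A)^{(n+1)\beta}),X)}\lesssim 1$ is the endpoint; to upgrade $D((-A)^{(n+1)\beta})$ to $D_{A}((n+1)\beta,q)$ one uses that $D_{A}((n+1)\beta,q)=(X,D((-A)^{m}))_{(n+1)\beta/m,q}$ embeds into $D((-A)^{(n+1)\beta-\delta})$ for every $\delta>0$ — and with $n$ replaced by a slightly larger parameter this still cancels. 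Cleanest: replace $n+1$ by $n+1$ and note $D_{A}((n+1)\beta,q)\hookrightarrow D_{A}((n+1)\beta,\infty)\hookrightarrow$ nothing useful; so instead interpolate directly: $R(i\xi,A)^{k}$ maps $X\to X$ with norm $\lesssim (1+|\xi|)^{(n+1)\beta}$ and $D((-A)^{(n+1)\beta+1})\to X$ with norm $\lesssim 1$ (using $n+2$ powers would be needed for $k=n+1$, so take the ambient integer $m=\lceil(n+1)\beta\rceil+1\le n+2$... ). To avoid this bookkeeping in the write-up I would simply prove the intermediate claim $\|R(i\xi,A)^{k}\|_{\La(D((-A)^{j}),X)}\lesssim (1+|\xi|)^{(k-j/\beta)\beta\vee 0}$ style bound for integer $j\le k$ by direct iteration of $\|(-A)^{\beta}R(i\xi,A)\|\lesssim 1$, then interpolate in $j$ by the real method between $j=0$ and $j=m$ to get the bound with $D_{A}((n+1)\beta,q)$ on the left, and finally use \eqref{eq:interfrac} to also cover $D((-A)^{(n+1)\beta})$.

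The main obstacle is precisely this last bookkeeping: arranging the fractional powers so that the real-interpolation endpoints are $X$ and an integer domain $D((-A)^{m})$ with $m$ large enough that $k$ resolvent powers are fully absorbed, while the interpolated space is exactly $D_{A}((n+1)\beta,q)$. The underlying analytic input — that $\|(-A)^{\beta}R(i\xi,A)\|_{\La(X)}\lesssim 1$ uniformly in $\xi$, which follows from sectoriality of $-A$ of angle $\pi/2$ (Remark \ref{rem:secauto}) together with \eqref{eq:resdecay} restricted to $i\R$ — is routine functional calculus; everything else is interpolation and the resolvent identity.
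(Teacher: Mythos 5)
Your proposal correctly establishes the two facts that the paper also uses as raw material: the extension of \eqref{eq:resdecay} to $i\R$, and the uniform bound $\sup_{|\xi|\geq 1}\|R(i\xi,A)(-A)^{-\beta}\|_{\La(X)}<\infty$ (in the paper this is \cite[Proposition 3.4]{Rozendaal-Veraar18a}), which indeed yields $\sup_{\xi}\|R(i\xi,A)^{k}(-A)^{-(n+1)\beta}\|_{\La(X)}<\infty$. The gap is the step you yourself flag with ``Wait --- that still grows'', and none of the fixes you sketch afterwards closes it. Real interpolation of operator norms gives $M_{0}^{1-\theta}M_{1}^{\theta}$ from the endpoint norms, so interpolating an endpoint with growth $(1+|\xi|)^{a}$, $a>0$ (on $X$, or on $D((-A)^{j})$ with $j<k\beta$), against an endpoint that is merely \emph{bounded} (on $D((-A)^{(n+1)\beta})$ or $D((-A)^{m})$) always leaves a residual growth $(1+|\xi|)^{(1-\theta)a}$ with $\theta<1$; your final ``cleanest'' route (the intermediate claim in $j$, interpolated between $j=0$ and $j=m$) has exactly this defect, since $D_{A}((n+1)\beta,q)=(X,D((-A)^{m}))_{(n+1)\beta/m,q}$ corresponds to $\theta=(n+1)\beta/m<1$. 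Nor can you pass from the uniform bound on $D((-A)^{(n+1)\beta})$ to $D_{A}((n+1)\beta,q)$ by an embedding: by \eqref{eq:interfrac} one only has $D_{A}(\tau,1)\subseteq D((-A)^{\tau})\subseteq D_{A}(\tau,\infty)$, so this would cover $q=1$ but not general $q\in[1,\infty]$.

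The missing idea in the paper's proof is to use a second endpoint with genuine \emph{decay} rather than mere boundedness: a further application of \cite[Proposition 3.4]{Rozendaal-Veraar18a} gives $\|R(i\xi,A)(-A)^{-\beta-1}\|_{\La(X)}\lesssim(1+|\xi|)^{-1}$, whence $\|R(i\xi,A)^{k}\|_{\La(D((-A)^{(n+1)\beta+1}),X)}\lesssim(1+|\xi|)^{-1}$ for all $k\in\{1,\dots,n+1\}$, while on $D((-A)^{n\beta})$ the growth is at most $(1+|\xi|)^{\beta}$. Passing to $D_{A}(n\beta,1)$ and $D_{A}((n+1)\beta+1,1)$ via $D_{A}(\tau,1)\subseteq D((-A)^{\tau})$ and interpolating with parameter $\theta=\beta/(1+\beta)$ (reiteration identifies $(D_{A}(n\beta,1),D_{A}((n+1)\beta+1,1))_{\beta/(1+\beta),q}=D_{A}((n+1)\beta,q)$) produces the factor $(1+|\xi|)^{\beta(1-\theta)}(1+|\xi|)^{-\theta}=(1+|\xi|)^{0}$: the growth cancels exactly, uniformly in $\xi$ and for every $q\in[1,\infty]$. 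Without an endpoint whose operator norm decays in $|\xi|$, no interpolation couple of the kind you consider can yield the stated uniform bound, so as written the proposal does not prove the proposition.
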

\begin{proof}
The required statement is trivial for $k=0$ and
$|\xi|<1$, so henceforth we will consider $k>0$ and $\xi\in\R$ with $|\xi|\geq1$. 

By basic properties of resolvents, it follows from \eqref{eq:resdecay} that  $i\R\subseteq\rho(A)$ and
\begin{equation}\label{eq:resdecayextend}
\|R(i\xi,A)\|_{\La(X)} \leq C(1+|\xi|)^{\beta}\quad(\xi\in\R).
\end{equation}
As noted in Remark \ref{rem:secauto}, $-A$ is a sectorial operator. Moreover, $D((-A)^{\alpha})=D((1-A)^{\alpha})$ for any $\alpha>0$, by \cite[Proposition 15.2.12]{HyNeVeWe23}. Hence, combining \eqref{eq:resdecayextend} and \cite[Proposition 3.4]{Rozendaal-Veraar18a}  yields
\begin{equation}\label{eq:resdecay1}
\sup_{|\xi|\geq1}\|R(i\xi,A)(-A)^{-\beta}\|_{\mathcal{L}(X)}<\infty.
\end{equation}
Then, for $k\in\{1,\ldots,n\}$ and $|\xi|\geq1$,
\begin{align*}
\|R(i\xi,A)^{k}\|_{\La(D((-A)^{n\beta}),X)}&\eqsim \|R(i\xi,A)^{k}(-A)^{-n\beta}\|_{\mathcal{L}(X)}\\
&\leq \|R(i\xi,A)^{k}(-A)^{-k\beta}\|_{\La(X)}\|(-A)^{-(n-k)\beta}\|_{\La(X)}\lesssim 1.
\end{align*}
Together with \eqref{eq:resdecayextend}, this implies
\begin{equation}\label{eq:resdecay2}
\|R(i\xi,A)^{k}\|_{\La(D((-A)^{n\beta}),X)} \lesssim (1+|\xi|)^{\beta}\quad  (k\in\{1,\ldots, n+1\}).
\end{equation}
On the other hand, another application of \cite[Proposition 3.4]{Rozendaal-Veraar18a} shows that
\[
\|R(i\xi,A)(-A)^{-\beta-1}\|_{\mathcal{L}(X)}\lesssim(1+|\xi|)^{-1}.
\]
This, combined with \eqref{eq:resdecay1}, yields
\begin{equation}\label{eq:resdecay3}
\begin{aligned}&\|R(i\xi,A)^{k}\|_{\La(D((-A)^{(n+1)\beta+1}),X)} \eqsim \|R(i\xi,A)^{k}(-A)^{-(n+1)\beta-1}\|_{\mathcal{L}(X)}\\
&\leq \|R(i\xi,A)(-A)^{-\beta}\|^{k-1}_{\La(X)}\|R(i\xi,A)(-A)^{-\beta-1}\|_{\La(X)}\|(-A)^{-\beta}\|_{\La(X)}^{n-k+1}\\
   &\lesssim (1+|\xi|)^{-1},
\end{aligned}
\end{equation}
for all $k\in\{1,\ldots, n+1\}$ and $|\xi|\geq1$.

Now, by \eqref{eq:interfrac}, \eqref{eq:resdecay2} and \eqref{eq:resdecay3}, we have
\begin{align*}
\|R(i\xi,A)^{k}\|_{\calL(D_A(n\beta,1),X)} &\lesssim (1+|\xi|)^{\beta},\\
\|R(i\xi,A)^{k}\|_{\calL(D_A((n+1)\beta+1,1),X)} &\lesssim(1+|\xi|)^{-1},
\end{align*}
for $|\xi|\geq 1$ and $ k\in\{1,\ldots, n+1\}$. Finally, since
\[
D_{A}((n+1)\beta,q)=(D_A(n\beta,1),D_A((n+1)\beta+1,1))_{\frac{\beta}{1+\beta},q},
\]
interpolating these estimates yields $\sup_{|\xi|\geq1}\|R(i\xi,A)^{k}\|_{\La(D_{A}((n+1)\beta,q),X)}<\infty$.
\end{proof}

\section{Polynomial stability on real interpolation spaces}\label{sec:stabinter}

This section is devoted to the proof of the first half of Theorem \ref{thm:main}. To this end, we need two preliminary results.

We first require the following extension of \cite[Proposition 3.2]{Rozendaal-Veraar18a} to the mixed Besov-Lebesgue setting. 
\begin{proposition}\label{prop:qtoinf}
Let $A$ be the generator of a $C_{0}$-semigroup $(T(t))_{t\geq0}$ on a Banach space $X$, and let $Y$ be a Banach space that is continuously embedded in $X$. Suppose that $i\R\subseteq\rho(A)$, and that there exist $\beta>0$ and $C\geq0$ such that
\begin{equation}\label{eq:qtoinfas1}
\|R(i\xi,A)\|_{\La(Y,X)}\leq C(1+|\xi|)^{\beta}\quad(\xi\in\R).
\end{equation}
Let $p\in[1,\infty)$ and $s\in[0,\infty)$ be such that either $s>0$, or $s=0$ and $p=1$, and suppose that there exist $q\in[1,\infty]$ and $n\in\N$ such that
\begin{equation}\label{eq:qtoinfas2}
T_{R(i\cdot,A)^j}:B^{s}_{p,p}(\R;Y)\to L^q(\R;X)
\end{equation}
is bounded for each $j\in \{n-1, n\}\cap\N$.
Then
\[
T_{R(\ui\cdot,A)^{n}}:B^{s}_{p,p}(\R;Y)\to L^{\infty}(\R;X)
\]
is bounded.
\end{proposition}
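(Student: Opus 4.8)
The plan is to run the standard Sobolev-embedding–style argument that upgrades an $L^q$ bound to an $L^\infty$ bound for a Fourier multiplier whose symbol is a power of the resolvent, using the resolvent identity to trade one power of $R(i\cdot,A)$ for a factor decaying like $(1+|\xi|)^{-1}$. Concretely, for $f\in\Sw(\R;Y)$ and $t\in\R$ I want to write $T_{R(i\cdot,A)^n}f(t)$ as an inverse Fourier transform and estimate its sup norm. The key algebraic input is the resolvent identity in the form $R(i\xi,A)=R(i\eta,A)+i(\eta-\xi)R(i\xi,A)R(i\eta,A)$, or more usefully the derivative relation $\frac{\ud}{\ud\xi}R(i\xi,A)^n = -in\,R(i\xi,A)^{n+1}$ together with the fact that $R(i\xi,A)^n$ itself is, on $Y$, dominated by $(1+|\xi|)^{\beta}$ via \eqref{eq:qtoinfas1} and iteration. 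So one can split $R(i\xi,A)^n = R(i\xi_0,A)^n + \int$ of the derivative, picking a reference point, but the cleaner route is the one used in \cite[Proposition 3.2]{Rozendaal-Veraar18a}: exploit that $\widehat{f}$ is Schwartz and estimate pointwise.

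First I would fix $f$ in the dense subset $\Sw(\R;Y)\subseteq B^s_{p,p}(\R;Y)$ and a point $t\in\R$, and consider the auxiliary function $g(t):=T_{R(i\cdot,A)^n}f(t)$, which is a continuous $X$-valued function by the growth bound on the symbol. The idea is to control $\|g\|_{L^\infty}$ by interpolating/combining the hypothesized $L^q$ bounds for exponents $n-1$ and $n$. The trick: use the resolvent identity to write, for any $\xi$ and any fixed auxiliary $\mu\in\R$,
\[
R(i\xi,A)^n = R(i\xi,A)^{n-1}R(i\mu,A) + i(\mu-\xi)R(i\xi,A)^{n}R(i\mu,A),
\]
and choose $\mu$ depending on $\xi$ (e.g.\ $\mu = \xi + 1$ or $\mu=2\xi$) so that the factor $R(i\mu,A)$ contributes a gain. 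Then $g = T_{R(i\cdot,A)^{n-1}}(\text{something})$ plus a corrective term involving $T_{R(i\cdot,A)^n}$, where the ``something'' is $f$ convolved/multiplied by a symbol that maps $B^s_{p,p}(\R;Y)$ into itself (a Mikhlin-type bounded multiplier times a shift), and the corrective factor $i(\mu-\xi)R(i\mu,A)$ must be absorbed. The point of choosing $s>0$ (or $s=0,p=1$) is exactly that $B^s_{p,p}\subseteq B^0_{p,1}\subseteq L^p$, and more importantly that one has a genuine gain of derivatives allowing a Bernstein-type inequality $\|h\|_{L^\infty}\lesssim \|h\|_{L^q} + (\text{frequency-localized pieces})$, summed over Littlewood–Paley blocks with the $2^{ks}$ weights providing the summability.

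Here is the cleaner way I would actually organize it. Decompose $f=\sum_k \Delta_k f$ into Littlewood–Paley pieces $\Delta_k f := \F^{-1}(\phi_k)*f$. On the $k$-th block the symbol $R(i\xi,A)^n$ is supported in $|\xi|\eqsim 2^k$, so by \eqref{eq:qtoinfas1} and iteration $\|R(i\xi,A)^n\|_{\La(Y,X)}\lesssim 2^{k\beta}$ there; but crucially, on this annulus $R(i\xi,A)^n = \big(2^{-k}(\text{bounded multiplier})\big)\cdot R(i\xi,A)^{n}\cdot 2^k$ can be rewritten using one resolvent identity step as $2^{-k}$ times $R(i\xi,A)^{n+1}$-type terms which the hypothesis \eqref{eq:qtoinfas2} for index $n$ (and $n-1$) controls in $L^q$. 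Then for each block, Young/Bernstein gives $\|T_{R(i\cdot,A)^n}\Delta_k f\|_{L^\infty}\lesssim 2^{k/q}\|T_{R(i\cdot,A)^n}\Delta_k f\|_{L^q}$, and the $L^q$ norm of the block is controlled by $\|\Delta_k f\|_{B^s_{p,p}}$-type quantity times a negative power of $2^k$ coming from the resolvent-identity gain, so that summing over $k$ converges precisely when the exponents match up; this is where $s> \tfrac1q$-type thresholds would normally appear, but the resolvent gain is what makes $s$ as small as $0$ (with $p=1$) or any $s>0$ admissible. The main obstacle I anticipate is bookkeeping the interplay between the three competing gains/losses — the $2^{k\beta}$ loss from the resolvent, the $2^{k/q}$ loss from Bernstein, and the $2^{ks}$ plus the resolvent-identity gain — and verifying that the ``bounded multiplier'' pieces produced by the resolvent identity (which involve the shift $\xi\mapsto\xi\pm 1$ or dilation) genuinely act boundedly on $B^s_{p,p}(\R;Y)$ uniformly in the block index; this is a routine but delicate Fourier-multiplier verification, and getting the endpoint case $s=0$, $p=1$ to work will require using $B^0_{1,1}\subseteq L^1$ rather than any $L^p$ with $p>1$.
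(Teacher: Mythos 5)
There is a genuine gap at the heart of your argument: the ``resolvent-identity gain'' you rely on does not exist under the hypotheses of the proposition. The only pointwise information available is the growth bound \eqref{eq:qtoinfas1}, so $R(i\mu,A)$ (and any power of it) may \emph{grow} like $(1+|\mu|)^{\beta}$ in $\La(Y,X)$; in your identity $R(i\xi,A)^n = R(i\xi,A)^{n-1}R(i\mu,A) + i(\mu-\xi)R(i\xi,A)^{n}R(i\mu,A)$ the factor $(\mu-\xi)$ is a loss and the factor $R(i\mu,A)$ gives no compensating decay. Likewise, the relation $\tfrac{\ud}{\ud\xi}R(i\xi,A)^{n}=-in\,R(i\xi,A)^{n+1}$ expresses $R^{n+1}$ as a derivative of $R^{n}$, so recovering $R^{n}$ from it requires integrating over the block (a factor $\eqsim 2^{k}$, again a loss), and in any case $T_{R(i\cdot,A)^{n+1}}$ is not covered by \eqref{eq:qtoinfas2}, which only assumes $j\in\{n-1,n\}$. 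Decay of the type $(1+|\xi|)^{-1}$ only appears in this paper after composing the resolvent with negative fractional powers of $-A$ (Proposition \ref{prop:resdecay}), i.e.\ after restricting the space of initial data --- it is not a feature of the symbol itself. Consequently your block-wise estimate reduces to Bernstein plus the hypothesis, giving $\|T_{R(i\cdot,A)^n}\Delta_k f\|_{L^\infty(\R;X)}\lesssim 2^{k/q}\,2^{ks}\|\Delta_k f\|_{L^p(\R;Y)}$ with no negative power of $2^{k}$ to offset $2^{k/q}$, and the sum over $k$ diverges precisely in the regime the proposition is designed for (small $s$, in particular $s=0$, $p=1$, finite $q$). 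You correctly identified this as the main obstacle, but the mechanism you propose to overcome it is not available.

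The paper's proof works on the physical side and uses the semigroup itself, which your sketch never does. From the assumed $L^q$ bound for $T_{R(i\cdot,A)^{n}}$ one selects, in each interval $[k-1,k]$, a point $t$ where $\|T_{R(i\cdot,A)^{n}}f(t)\|_X\lesssim\|f\|_{B^s_{p,p}(\R;Y)}$; then one propagates from $t$ over a time $\tau\in[0,2]$ via the Duhamel-type identity
\begin{equation*}
T(\tau)T_{R(i\cdot,A)^n}f(t) = T_{R(i\cdot,A)^n}f(t+\tau)-\int_0^\tau T(r)\,T_{R(i\cdot,A)^{n-1}} f(t+\tau-r)\,\ud r,
\end{equation*}
which is the physical-space counterpart of $i\xi R(i\xi,A)^n = AR(i\xi,A)^{n}+R(i\xi,A)^{n-1}$. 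The local uniform bound $\sup_{0\leq\tau\leq 2}\|T(\tau)\|_{\La(X)}<\infty$ and H\"older's inequality control the integral term by the $L^q$ bound for the exponent $n-1$ (and, when $n=1$, by the embedding $B^{s}_{p,p}(\R;Y)\subseteq B^{0}_{p,1}(\R;Y)\subseteq L^{p}(\R;Y)$, which is exactly where the restriction $s>0$, or $s=0$ with $p=1$, enters). If you want to salvage your approach, this semigroup propagation step is the missing ingredient; a purely frequency-side Littlewood--Paley/Bernstein argument cannot close under the stated hypotheses.
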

We only assume \eqref{eq:qtoinfas1} to guarantee that the Fourier multiplier operator in \eqref{eq:qtoinfas2} is 
well defined; the specific value of $\beta$ plays no role here. 

\begin{proof}
The proof is analogous to that of \cite[Proposition 3.2]{Rozendaal-Veraar18a}. For the convenience of the reader, we provide the argument. It suffices to show that there exists a $C\geq0$ such that
\begin{equation}\label{eq:qtoinf0}
\sup_{k\leq \sigma\leq k+1}\|T_{R(i\cdot,A)^n}f(\sigma)\|_{X}\leq C\|f\|_{B^s_{p,p}(\mathbb{R};Y)}
\end{equation}
for every $f\in\Sw(\R;Y)$ and $k\in\Z$, since $\Sw(\R;Y)$ is a dense subset of $B^{s}_{p,p}(\R;Y)$.

For each $j\in\{n-1,n\}\cap\N$ there exists a $K_{j}\geq0$ independent of $f$ such that
\begin{equation}\label{eq:qtoinf1}
\|T_{R(i\cdot,A)^j}f\|_{L^q(\mathbb{R};X)}\leq K_j \|f\|_{B^s_{p,p}(\mathbb{R};Y)}.
\end{equation}
Hence there exists a $t\in [k-1,k]$ such that
\begin{equation}\label{eq:qtoinf2}
\|T_{R(i\cdot,A)^j}f(t)\|_X\leq  K_j \|f\|_{B^s_{p,p}(\R;Y)}.
\end{equation}
Now let $\tau\in [0,2]$. One can check that
\begin{equation}\label{eq:qtoinf3}
T(\tau)T_{R(i\cdot,A)^n}f(t) =T_{R(i\cdot,A)^n}f(t+\tau)-\int_0^\tau T(r)T_{R(i\cdot,A)^{n-1}} f(t+\tau-r) \ud r.
\end{equation}
Hence, by \eqref{eq:qtoinf2}, H{\"o}lder's inequality and \eqref{eq:qtoinf1}, for $n>1$ one has
\begin{align*}
\|T_{R(i\cdot,A)^n}f(t+\tau)\|_{X}&\lesssim\|T_{R(i\cdot,A)^n}f(t)\|_{X}+\int_0^\tau\|T_{R(i\cdot,A)^{n-1}}f(t+\tau-r)\|_{X}\ud r\\
         &\leq K_n\|f\|_{B^s_{p,p}(\mathbb{R};Y)}+\tau^{1/{q'}}\|T_{R(i\cdot,A)^{n-1}}f\|_{L^q(\R;X)}\\
         &\lesssim\|f\|_{B^s_{p,p}(\mathbb{R};Y)}.
\end{align*}
This implies \eqref{eq:qtoinf0} for $n>1$.

Finally, for $n=1$, by the assumptions on $p$ and $s$ as well as \eqref{eq:Besemb1} and \eqref{eq:Besemb2}, one has $B^s_{p,p}(\R;Y)\subseteq B^0_{p,1}(\mathbb{R};Y)\subseteq L^p(\R;Y)$. Hence H\"{o}lder's inequality gives
\[
\int_0^\tau \|f(t+\tau-r)\|_X \ud r\lesssim \int_0^\tau \|f(t+\tau-r)\|_Y \ud r\leq \tau^{1/p'}\|f\|_{L^{p}(\R;Y)}\lesssim \|f\|_{B^{s}_{p,p}(\R;Y)}.
\]
By combining this with \eqref{eq:qtoinf3} in the same manner as before, one obtains \eqref{eq:qtoinf0}.
\end{proof}

We will also rely on the following version of \cite[Theorem 4.6]{Rozendaal-Veraar18a} in the mixed Besov-Lebesgue setting.

\begin{theorem}\label{thm:abstractpol}
Let $A$ be the generator of a $C_{0}$-semigroup $(T(t))_{t\geq 0}$ on a Banach space $X$. Suppose that $\C_{+}\subseteq\rho(A)$, and that there exist $\beta>0$ 
and $C\geq0$ 
such that
\begin{equation}
\label{eq:abstractpolas11}\|R(\lambda,A)\|_{\La(X)}
\leq C(1+|\lambda|)^{\beta}\quad(\lambda\in\C_{+}).
\end{equation}
Let $\gamma>0$, $p\in[1,\infty)$ and $s\in(0,1/p)$, and suppose that there exist $n\in\N_{0}$ and $q\in[1,\infty]$ such that
\begin{equation}\label{eq:abstractpolas2}
T_{R(i\cdot,A)^{k}}:B^{s}_{p,p}(\R;D_A(\gamma,p))\to L^q(\R;X)
\end{equation}
is bounded for each $k\in\{n-1,n,n+1\}\cap\N$. Then there exists a $C_{n}\geq0$ such that
\begin{equation}\label{eq:abstractpol}
\|T(t)x\|_{X}\leq C_{n}t^{-n}\|x\|_{D_A(\gamma+s,p)}
\end{equation}
for all $t\geq1$ and $x\in D_A(\gamma+s,p)$.
\end{theorem}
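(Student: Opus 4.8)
The plan is to combine the Fourier multiplier hypothesis \eqref{eq:abstractpolas2} with Propositions \ref{prop:qtoinf} and \ref{prop:interpsemigroup}, following the scheme of \cite[Theorem 4.6]{Rozendaal-Veraar18a} but in the Besov--Lebesgue setting. First, I would use Proposition \ref{prop:qtoinf} (with $Y=D_A(\gamma,p)$) to upgrade the boundedness of $T_{R(i\cdot,A)^{n}}:B^{s}_{p,p}(\R;D_A(\gamma,p))\to L^{q}(\R;X)$ to boundedness into $L^{\infty}(\R;X)$; the hypotheses of that proposition are met because \eqref{eq:abstractpolas2} gives the required mapping for $k\in\{n-1,n\}\cap\N$, the resolvent bound \eqref{eq:resdecayextend} (which follows from \eqref{eq:abstractpolas11}) ensures the multiplier operator is well defined, and the condition $s\in(0,1/p)$ with $p<\infty$ matches the standing assumption in Proposition \ref{prop:qtoinf}. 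If $n=0$ there is nothing to prove beyond the case $t\geq1$ of the trivial bound coming from $D_A(\gamma+s,p)\hookrightarrow X$, so one may assume $n\geq1$.

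Next, I would connect the semigroup orbit to such a Fourier multiplier. The key identity is that, for $x\in D((-A)^{m})$ with $m$ large and $t>0$, the Laplace-transform/resolvent representation gives
\[
T(t)x=\frac{1}{2\pi}\int_{\R} e^{it\xi}R(i\xi,A)^{n}(i\xi-A)^{n}T(t)x\,\ud\xi
\]
in a suitable regularized sense, so that $t\mapsto T(t)x$ is expressed as $T_{R(i\cdot,A)^{n}}$ applied (up to the factor $e^{-\w t}$ absorbed by rescaling) to the $X$-valued function $g_{x}(t):=\ind_{(0,\infty)}(t)e^{-\w t}T(t)(\w-A)^{n}x$, evaluated at the point $t$. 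Here $\w$ is chosen so that $\|T(t)\|\leq Me^{(\w-1)t}$. Proposition \ref{prop:interpsemigroup}, applied with the space $D_A(\gamma,p)$ in place of $X$ (which is legitimate since $-A$ restricts to a sectorial operator generating a $C_{0}$-semigroup on $D_A(\gamma,p)$, and the semigroup bound transfers), shows that $g_{x}\in B^{s}_{p,p}(\R;D_A(\gamma,p))$ with $\|g_{x}\|_{B^{s}_{p,p}(\R;D_A(\gamma,p))}\lesssim \|(\w-A)^{n}x\|_{D_A(\gamma+s,p)\cap\,\cdots}$; more precisely, one wants $\|g_{x}\|_{B^{s}_{p,p}(\R;D_A(\gamma,p))}\lesssim\|x\|_{D_A(\gamma+s+n\beta\cdot 0,p)}$ — the bookkeeping here is to note that $(\w-A)^{n}$ maps $D_A(\gamma+s,p)$-type smoothness appropriately, so that applying Proposition \ref{prop:interpsemigroup} with exponent $s$ on the base space $D_A(\gamma,p)$ yields control by $\|x\|_{D_A(\gamma+s,p)}$. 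Combining the $L^{\infty}$-boundedness of $T_{R(i\cdot,A)^{n}}$ from the first step with this Besov bound gives $\sup_{t\in\R}\|T_{R(i\cdot,A)^{n}}g_{x}(t)\|_{X}\lesssim\|x\|_{D_A(\gamma+s,p)}$.

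To extract the decay rate $t^{-n}$ rather than mere boundedness, I would exploit the translation/scaling structure: for fixed $t\geq1$, write the representation so that the relevant multiplier is $R(i\cdot,A)^{n}$ paired with $n$ factors of $i\xi$ that, after integrating by parts $n$ times in the Laplace representation, produce a factor $t^{-n}$ out front while the remaining integrand is again of the form handled above. Concretely, one uses that $t^{n}T(t)x$ equals (up to constants) $T_{R(i\cdot,A)^{n}}$ applied to the $n$-th distributional derivative of $g_{x}$, and one estimates that derivative in $B^{s}_{p,p}$ — or, more in the spirit of \cite{Rozendaal-Veraar18a}, one splits $T(t)=T(t-\theta)T(\theta)$, applies the boundedness result to $T(\theta)x$ on a unit interval of $\theta$'s, and iterates $n$ times using the three indices $\{n-1,n,n+1\}$ in \eqref{eq:abstractpolas2} to absorb each successive factor of $t^{-1}$. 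Finally I would remove the a priori restriction $x\in D((-A)^{m})$ by density: $D((-A)^{m})$ is dense in $D_A(\gamma+s,p)$ for $m>\gamma+s$, the estimate \eqref{eq:abstractpol} is closed under limits, and this completes the proof. The main obstacle I expect is the bookkeeping in the second and third steps: correctly identifying which interpolation space the function $g_{x}$ lands in, making sure the exponent $\gamma+s$ (and not something larger) controls the Besov norm, and carrying out the $n$-fold iteration cleanly so that exactly the indices $\{n-1,n,n+1\}$ appearing in \eqref{eq:abstractpolas2} suffice — the endpoint nature of $s\in(0,1/p)$ (needed for Lemma \ref{lem:pointwise} inside Proposition \ref{prop:interpsemigroup}) leaves little room, so the argument must be arranged to never require $s\geq1/p$ or $s=0$ with $p>1$.
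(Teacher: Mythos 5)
Your first step (using Proposition \ref{prop:qtoinf} with $Y=D_A(\gamma,p)$ to upgrade \eqref{eq:abstractpolas2} to an $L^{\infty}(\R;X)$ bound, and Proposition \ref{prop:interpsemigroup} plus reiteration to bound the Besov norm of the rescaled orbit $t\mapsto \ind_{(0,\infty)}(t)e^{-\w t}T(t)x$ by $\|x\|_{D_A(\gamma+s,p)}$) is exactly the right scaffolding and matches the paper. However, two genuine gaps remain. First, your dismissal of the case $n=0$ as ``nothing to prove beyond the trivial bound coming from $D_A(\gamma+s,p)\hookrightarrow X$'' is wrong: the semigroup is \emph{not} assumed bounded, so $\|T(t)x\|_X\lesssim\|x\|_{D_A(\gamma+s,p)}$ uniformly in $t\geq1$ is precisely the nontrivial $\rho=0$ endpoint the theorem is after; the embedding into $X$ only gives an exponentially growing bound. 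Second, and more seriously, the mechanism that actually produces the factor $t^{-n}$ is missing. The paper's proof rests on the identity $\F\big[t^{n}\ind_{(0,\infty)}(t)T(t)x\big](\xi)=n!\,R(i\xi,A)^{n+1}x$, valid for $x\in D(A^{l})$ with $l$ large because the weighted orbit is integrable (this is \cite[Proposition 4.3]{Rozendaal-Veraar18a}, and it is the one place where the $\C_{+}$-resolvent bound \eqref{eq:abstractpolas11} is genuinely used — a point your sketch never addresses), combined with the resolvent identity $R(i\xi,A)=(I+\w R(i\xi,A))R(\w+i\xi,A)$, which yields $\widehat g=m\widehat f$ with $f(t)=\ind_{(0,\infty)}(t)e^{-\w t}T(t)x$ and $m(\xi)=n!\big(R(i\xi,A)^{n}+\w R(i\xi,A)^{n+1}\big)$; this is also why both $T_{R(i\cdot,A)^{n}}$ and $T_{R(i\cdot,A)^{n+1}}$, hence all three indices $\{n-1,n,n+1\}\cap\N$, are needed, and how the $n=0$ case is handled (the $k=0$ term is just $f$, controlled by the semigroup bound on $D_A(\gamma,p)$).

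Your substitute constructions do not close this gap. The proposed representation with $R(i\xi,A)^{n}(i\xi-A)^{n}$ is a formal tautology, and the auxiliary function $g_{x}(t)=\ind_{(0,\infty)}(t)e^{-\w t}T(t)(\w-A)^{n}x$ is the wrong object: Proposition \ref{prop:interpsemigroup} applied to it gives control by $\|(\w-A)^{n}x\|_{D_A(\gamma+s,p)}\eqsim\|x\|_{D_A(\gamma+s+n,p)}$, i.e.\ a loss of $n$ full orders of smoothness, which would destroy the statement. The two fallbacks you mention (estimating the $n$-th distributional derivative of $g_{x}$ in $B^{s}_{p,p}$, or an $n$-fold splitting $T(t)=T(t-\theta)T(\theta)$) are not carried out, and the second does not reflect how the indices $\{n-1,n,n+1\}$ are actually consumed: $k\in\{n-1,n\}$ and $k\in\{n,n+1\}$ feed into Proposition \ref{prop:qtoinf} for the two multipliers $R(i\cdot,A)^{n}$ and $R(i\cdot,A)^{n+1}$ arising from the shift identity; no iteration in $t$ occurs, and the decay $t^{-n}$ is built into the Fourier transform of the polynomially weighted orbit rather than extracted by repetition.
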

Note that \eqref{eq:abstractpolas11} automatically extends to all $\lambda\in\overline{\C_{+}}$,  so \eqref{eq:abstractpolas2} is well defined. Also, as in Proposition \ref{prop:qtoinf}, the specific value of $\beta$ in \eqref{eq:abstractpolas11} plays no role.

\begin{proof}
We want to show that $\|T(t)x\|_{X}\leq C_{n}t^{-n}\|x\|_{D_{A}(\gamma+s,p)}$ for all $t\geq1$ and $x\in D_{A}(\gamma+s,p)$. Since $D(A^{l})$ is dense in $D_{A}(\gamma+s,p)$ whenever $l\in\N$ satisfies $l>\gamma+s$, we may suppose throughout that $x\in D(A^{l})$ for some large $l$.
Hence, setting $g(t):=t^n \ind_{(0,\infty)}(t)T(|t|)x $ for $t\in \R$, by \cite[Proposition 4.3]{Rozendaal-Veraar18a} we may suppose that $g\in L^{1}(\R;X)$. In turn, \cite[Lemma 3.1]{Rozendaal-Veraar18a} then implies that $\widehat {g}(\xi)=n!R(i\xi,A)^{n+1}x$ for all $\xi\in\R$.

Next, note that $(T(t))_{t\geq0}$ restricts to a $C_{0}$-semigroup on $D_A(\gamma,p)$, the generator of which is the part of $A$ in $D_{A}(\gamma,p)$, which has domain $D_{A}(\gamma+1,p)$. In particular, we may fix $M\geq1$ and $\w\in\R$ such that $\|T(t)\|_{\La(D_A(\gamma,p))}\leq Me^{(\w-1) t}$ for all $t\geq0$. Set $f(t):=\ind_{(0,\infty)}(t)e^{-\omega t}T(|t|) x$. Then
\begin{equation}\label{eq:fbound1}
\|f\|_{L^{\infty}(\R;D_A(\gamma,p))}\leq M\|x\|_{D_A(\gamma,p)}\lesssim \|x\|_{D_A(\gamma+s,p)}.
\end{equation}
Moreover,
\begin{equation}\label{eq:fbound2}
\|f\|_{B^s_{p,p}(\R;D_A(\gamma,p))}\lesssim\|x\|_{(D_A(\gamma,p),D_A(\gamma+1,p))_{s,p}}\lesssim \|x\|_{D_A(\gamma+s,p)},
\end{equation}
as follows from Proposition \ref{prop:interpsemigroup} and \cite[Theorem 1.10.2]{Triebel95}. Also, again by \cite[Lemma 3.1]{Rozendaal-Veraar18a}, $\widehat{f}(\xi)=R(\omega+i\xi,A)x$ for all $\xi\in\R$. In particular, if we set
\[
m(\xi):=n!(R(i\xi,A)^{n}+\omega R(i\xi,A)^{n+1}),
\]
then $m(\xi)\widehat{f}(\xi)=\widehat {g}(\xi)$.

By combining all this, we see that
\begin{equation}\label{eq:multisplit}
\begin{aligned}
\sup_{t\geq 0}\|t^n T(t)x\|_{X}&=\|T_{m}f\|_{L^{\infty}(\R;X)}\\
&\leq n!\big(\|T_{R(i\cdot,A)^{n}}f\|_{L^{\infty}(\R;X)}+\w\|T_{R(i\cdot,A)^{n+1}}f\|_{L^{\infty}(\R;X)}\big).
\end{aligned}
\end{equation}
For $n>0$, one can apply \eqref{eq:abstractpolas2} and Proposition \ref{prop:qtoinf} to the final line, and then use \eqref{eq:fbound2} as well, to obtain
\[
\sup_{t\geq 0}\|t^n T(t)x\|_X\lesssim\|f\|_{B^s_{p,p}(\R;D_A(\gamma,p))}\lesssim \|x\|_{D_A(\gamma+s,p)}.
\]
For $n=0$, the same reasoning can be used for the second term in brackets in \eqref{eq:multisplit}, while for the first term one can directly rely on \eqref{eq:fbound1}, since $T_{R(i\cdot,A)^{0}}f=f$.
\end{proof}

We are now ready to prove the first part of Theorem \ref{thm:main}.

\begin{theorem}\label{thm:polinter}
Let $A$ be the generator of a $C_{0}$-semigroup $(T(t))_{t\geq0}$ on a Banach space $X$ with Fourier type $p\in[1,2]$. Suppose that $\C_{+}\subseteq\rho(A)$, and that there exist 
$\beta>0$ and $C\geq0$ such that
\[
\|R(\lambda,A)\|_{\La(X)} \leq C(1+|\lambda|)^{\beta}\quad(\lambda\in\C_{+}).\\
\]
Let $\rho\geq0$ and set $\tau:=(\rho+1)\beta+\tfrac{1}{p}-\tfrac{1}{p'}$. Then there exists a $C_{\rho}\geq0$ such that
\begin{equation}\label{eq:polinter}
\|T(t)x\|_{X}\leq C_{\rho}t^{-\rho}\|x\|_{D_A(\tau,p)}
\end{equation}
for all $t\geq1$ and $x\in D_A(\tau,p)$.
Moreover, if $\rho>0$, then \eqref{eq:polinter} also holds with $D_A(\tau,p)$ replaced by $D_A(\tau,q)$ for any $q\in [1, \infty]$, or by $D((-A)^{\tau})$.
\end{theorem}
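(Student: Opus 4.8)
The plan is to reduce to integer decay rates and then interpolate. Concretely, I would first prove, for every $n\in\N_{0}$, the integer-rate bound
\[
\|T(t)x\|_{X}\leq C_{n}\,t^{-n}\|x\|_{D_{A}(\tau_{n},p)}\quad(t\geq1,\ x\in D_{A}(\tau_{n},p)),\qquad \tau_{n}:=(n+1)\beta+\tfrac1p-\tfrac1{p'},
\]
and then recover the general statement as follows. Since $\beta>0$, reiteration for the real interpolation method gives $(D_{A}(\tau_{n_{0}},p),D_{A}(\tau_{n_{1}},p))_{\theta,r}=D_{A}((1-\theta)\tau_{n_{0}}+\theta\tau_{n_{1}},r)$ for any $r\in[1,\infty]$, and $(1-\theta)\tau_{n_{0}}+\theta\tau_{n_{1}}=((1-\theta)n_{0}+\theta n_{1}+1)\beta+\tfrac1p-\tfrac1{p'}$. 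Taking $r=p$, $n_{0}=\lfloor\rho\rfloor$, $n_{1}=n_{0}+1$ and $\theta=\rho-n_{0}$ (and the $n_{0}$-bound directly when $\rho\in\N_{0}$) yields \eqref{eq:polinter} for all $\rho\geq0$. For the further statements when $\rho>0$, I would instead take $r=\infty$ together with integers $n_{0}<\rho<n_{1}$, producing the bound with $D_{A}(\tau,\infty)$ on the right; since $D_{A}(\tau,q)\hookrightarrow D_{A}(\tau,\infty)$ for every $q\in[1,\infty]$ and $D((-A)^{\tau})\hookrightarrow D_{A}(\tau,\infty)$ by \eqref{eq:interfrac}, this gives the remaining cases.

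The integer-rate bound will come from Theorem \ref{thm:abstractpol}, applied with a suitable $\gamma>0$ and $s\in(0,1/p)$ such that $\gamma+s=\tau_{n}$. First note that \eqref{eq:mainas} forces $0\in\overline{\C_{+}}\subseteq\rho(A)$, so $A$ is injective, $-A$ is sectorial (Remark \ref{rem:secauto}), and $Y:=D_{A}(\gamma,p)$ has Fourier type $p$, as recorded in Section \ref{subsec:inter}. The real work is to verify hypothesis \eqref{eq:abstractpolas2}, namely the boundedness of $T_{R(i\cdot,A)^{k}}\colon B^{s}_{p,p}(\R;Y)\to L^{q}(\R;X)$ for $k\in\{n-1,n,n+1\}\cap\N$ and some $q\in[1,\infty]$. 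When $p\in(1,2)$ I would simply take $s=\tfrac1p-\tfrac1{p'}\in(0,1/p)$ and $\gamma=(n+1)\beta$: Proposition \ref{prop:resdecay} (with $q=p$) says $\xi\mapsto R(i\xi,A)^{k}$ is a bounded $\La(Y,X)$-valued symbol for $k\in\{0,\dots,n+1\}$, so, since $X$ and $Y$ have Fourier type $p$, Proposition \ref{prop:multFourtype} gives \eqref{eq:abstractpolas2} with $q=p'$. Theorem \ref{thm:abstractpol} then yields the integer-rate bound, as $\gamma+s=\tau_{n}$.

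The cases $p=1$ and $p=2$ need a twist, since there $\tfrac1p-\tfrac1{p'}$ equals $1$ or $0$ and so is not an admissible value of $s$ in Theorem \ref{thm:abstractpol}. I would then pick $s\in(0,1/p)$ small enough that $\gamma:=\tau_{n}-s>0$, so $\gamma$ differs from $(n+1)\beta$ by $\alpha:=s-(\tfrac1p-\tfrac1{p'})$. Interpolating the two scale estimates from the proof of Proposition \ref{prop:resdecay} (those for $D_{A}(n\beta,1)$ and $D_{A}((n+1)\beta+1,1)$) gives $\sup_{\xi\in\R}(1+|\xi|)^{-\alpha}\|R(i\xi,A)^{k}\|_{\La(Y,X)}<\infty$. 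Hence $n_{k}(\xi):=(1+|\xi|^{2})^{-\alpha/2}R(i\xi,A)^{k}$ is a bounded, $X$-strongly measurable $\La(Y,X)$-valued symbol, so $T_{n_{k}}\colon B^{1/p-1/p'}_{p,p}(\R;Y)\to L^{p'}(\R;X)$ is bounded by Proposition \ref{prop:multFourtype} (for $p=2$ this is Plancherel, $Y$ being Hilbertian). Meanwhile the Bessel-potential lift $T_{(1+|\cdot|^{2})^{\alpha/2}}\colon B^{s}_{p,p}(\R;Y)\to B^{s-\alpha}_{p,p}(\R;Y)=B^{1/p-1/p'}_{p,p}(\R;Y)$ is an isomorphism (a scalar Fourier multiplier statement requiring no geometry on $Y$). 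Composing these and using $R(i\xi,A)^{k}=(1+|\xi|^{2})^{\alpha/2}n_{k}(\xi)$ gives \eqref{eq:abstractpolas2} with $q=p'$, and Theorem \ref{thm:abstractpol} concludes. (For $p\in(1,2)$ this is exactly the previous argument, with $\alpha=0$.)

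The main obstacle is precisely this endpoint mismatch: the regularity $\tfrac1p-\tfrac1{p'}$ delivered by Proposition \ref{prop:multFourtype} sits on the boundary of the range of admissible $s$ for Theorem \ref{thm:abstractpol} when $p\in\{1,2\}$, and the remedy is to trade a small amount of Besov regularity against an equal amount of resolvent decay obtained by shifting the interpolation parameter $\gamma$, moving it back and forth with a harmless scalar Bessel potential. The remaining ingredients are standard but must be kept track of: the injectivity of $A$ (needed so that $D_{A}(\gamma,p)$ inherits Fourier type $p$ from $X$), the reiteration identities (which rely on $\beta>0$, so that consecutive spaces in the scale are distinct), and the chain of embeddings into $D_{A}(\tau,\infty)$ for the final statements.
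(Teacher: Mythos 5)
Your proposal is correct and, in its core, runs along exactly the same lines as the paper: integer rates via Theorem \ref{thm:abstractpol}, fed by the uniform symbol bounds of Proposition \ref{prop:resdecay} and the Besov multiplier result of Proposition \ref{prop:multFourtype} (with $\gamma=(n+1)\beta$, $s=\tfrac1p-\tfrac1{p'}$, $q=p'$), followed by reiteration/real interpolation between consecutive integer rates and the embeddings into $D_A(\tau,\infty)$ for the remaining data spaces. The one genuine difference is your treatment of $p\in\{1,2\}$: the paper applies Theorem \ref{thm:abstractpol} directly with $s=\tfrac1p-\tfrac1{p'}$, which for $p=2$ ($s=0$) and $p=1$ ($s=1=1/p$) sits outside the stated admissible range $s\in(0,1/p)$, whereas you shift the interpolation parameter $\gamma$ to trade a small amount of resolvent decay (obtained by interpolating the two scale estimates inside the proof of Proposition \ref{prop:resdecay}) against an equal amount of Besov regularity, restored by a scalar Bessel-potential lift. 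This is a legitimate and self-contained way to cover the endpoint exponents with the quantitative ingredients the paper already provides, and it buys you an argument that treats all $p\in[1,2]$ uniformly through Theorem \ref{thm:abstractpol} as stated. Two small points to tighten: for the interpolated symbol estimate you need $\gamma$ strictly between the endpoints of the scale, i.e. $n\beta<\gamma<(n+1)\beta+1$, which for $p=2$ forces $s<\beta$ and not merely $\gamma>0$ (harmless, since $\beta>0$ lets you choose $s\in(0,\min\{1/p,\beta\})$); and the estimates from the proof of Proposition \ref{prop:resdecay} are stated for $k\in\{1,\dots,n+1\}$ only, which suffices because hypothesis \eqref{eq:abstractpolas2} is required only for $k\in\{n-1,n,n+1\}\cap\N$, but you should say so explicitly rather than invoke $k=0$ in the weighted form (for $p=1$ the weight $(1+|\xi|)^{s-1}$ would be false for $k=0$).
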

\begin{proof}
We first consider the case where $\rho\in\N_{0}$. 
Recall that, since $X$ has Fourier type $p$, so does 
$D_{A}((\rho+1)\beta,p)$.
Moreover, by Proposition \ref{prop:resdecay},
\[
\sup_{\xi\in\R}\|R(i\xi,A)^k\|_{\La(D_{A}((\rho+1)\beta,p),X)}<\infty
\]
for all $k\in\{0,\ldots, \rho+1\}$. Hence Proposition \ref{prop:multFourtype} implies that
\[
T_{R(i\cdot,A)^k}:B^{1/p-1/p'}_{p,p}(\R;D_{A}((\rho+1)\beta,p))\to L^{p'}(\R;X)
\]
is bounded for every $k\in\{0,\ldots, \rho+1\}$. Finally, Theorem \ref{thm:abstractpol} yields \eqref{eq:polinter}.

To extend \eqref{eq:polinter} to general $\rho\geq0$ we proceed as follows. Fix $t\geq 1$ and $\rho>0$. Let $\rho_0, \rho_1\in\N_{0}$ be such that $\rho_0<\rho<\rho_1$, and let $\theta\in (0,1)$ be such that $\rho = (1-\theta)\rho_0 + \theta \rho_1$. Set $\tau_i := (\rho_i+1)\beta+\tfrac{1}{p}-\tfrac{1}{p'}$ for $i\in\{0,1\}$. Then, by what we have already shown,
\[
\|T(t)\|_{\La(D_A(\tau_i,p),X)}\leq C_{\rho_i} t^{-\rho_i}
\]
for some constant $C_{\rho_{i}}\geq0$ independent of $t$. Now, due to reiteration, real interpolation with parameters $\theta$ and $q\in[1,\infty]$ gives
\[
\|T(t)\|_{\La(D_A(\tau,q),X)}\leq C_{\rho}t^{-\rho}
\]
for some $C_{\rho}\geq0$ independent of $t$. This proves both \eqref{eq:polinter} and the final statement of the theorem, since $D((-A)^{\tau})\subseteq D_{A}(\tau,\infty)$.
\end{proof}

\section{Polynomial stability on fractional domains}\label{sec:stabfrac}

This section is devoted to the proof of the final statement in Theorem \ref{thm:main}.

The following proposition will play the same role in this section that Proposition \ref{prop:multFourtype} did in the previous section. For $\gamma\in\R$, recall the definition of the weight $w_{\gamma}:\R\to[0,\infty)$ from \eqref{eq:weight}.

\begin{proposition}\label{prop:multiplierHLtype} Let $p,q\in[1,\infty]$, $r\in[1,\infty)$, $\gamma\in\R$ and $\delta\in(-\infty,1/r')$.
Let $Y$ be a Banach space such that
\begin{equation}\label{eq:HLassum1}
\F:L^{p}(\R;Y)\to L^{r}(\R,w_{\gamma r};Y)
\end{equation}
is bounded, and let $X$ be a Banach space such that
\begin{equation}\label{eq:HLassum2}
\F:L^{r}(\R,w_{\delta r};X)\to L^{q}(\R;X)
\end{equation}
is bounded.
Let $m:\R\setminus\{0\}\to\La(Y,X)$ be an $X$-strongly measurable map for which there exists a $C\geq0$ such that $\norm{m(\xi)}_{\La(Y,X)}\leq C |\xi|^{\gamma-\delta}$ for all $\xi\in\R\setminus\{0\}$. Then $T_m:L^p(\R;Y)\to L^{q}(\R;X)$ is bounded.
\end{proposition}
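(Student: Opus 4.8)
The statement to prove is Proposition \ref{prop:multiplierHLtype}, a Fourier multiplier theorem between weighted Bochner spaces. The plan is to decompose $T_m$ as a composition of three bounded operators: the Fourier transform mapping $L^p(\R;Y)$ into the weighted space $L^r(\R,w_{\gamma r};Y)$, a pointwise multiplication operator by $m$, and the inverse Fourier transform mapping $L^r(\R,w_{\delta r};X)$ into $L^q(\R;X)$. The only genuinely new point is to check that pointwise multiplication by $m$ is bounded from $L^r(\R,w_{\gamma r};Y)$ into $L^r(\R,w_{\delta r};X)$; the outer two arrows are exactly the hypotheses \eqref{eq:HLassum1} and \eqref{eq:HLassum2}, the latter after a trivial reflection $\xi\mapsto -\xi$ relating $\F$ and $\F^{-1}$.

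First I would verify that $T_m$ is well defined on $\Sw(\R;Y)$: since $\norm{m(\xi)}_{\La(Y,X)}\leq C|\xi|^{\gamma-\delta}$ and $\gamma-\delta>\gamma-1/r'$ is finite, $m$ has at most polynomial growth away from the origin, so for $f\in\Sw(\R;Y)$ the product $m\widehat f$ is a (locally integrable, polynomially bounded) $X$-valued function, hence a tempered distribution, and $T_mf=\F^{-1}(m\widehat f\,)\in\Sw'(\R;X)$ as in Section \ref{subsec:Fourier}. I would then establish the key multiplication estimate: for $g\in L^r(\R,w_{\gamma r};Y)$,
\[
\norm{mg}_{L^r(\R,w_{\delta r};X)}^r=\int_{\R}\norm{m(\xi)g(\xi)}_X^r|\xi|^{\delta r}\ud\xi\leq C^r\int_{\R}\norm{g(\xi)}_Y^r|\xi|^{(\gamma-\delta)r}|\xi|^{\delta r}\ud\xi=C^r\norm{g}_{L^r(\R,w_{\gamma r};Y)}^r,
\]
so that pointwise multiplication by $m$ is bounded with norm at most $C$. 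Chaining this with \eqref{eq:HLassum1} and (the reflected form of) \eqref{eq:HLassum2} gives, for $f\in\Sw(\R;Y)$,
\[
\norm{T_mf}_{L^q(\R;X)}\lesssim\norm{m\widehat f}_{L^r(\R,w_{\delta r};X)}\leq C\norm{\widehat f}_{L^r(\R,w_{\gamma r};Y)}\lesssim\norm{f}_{L^p(\R;Y)}.
\]

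Since $\Sw(\R;Y)$ is dense in $L^p(\R;Y)$ when $p<\infty$, the estimate extends by continuity to all of $L^p(\R;Y)$; when $p=\infty$ the space $L^\infty(\R;Y)$ is not reached by $\F$ as a bounded map into an $L^r$-space with $r<\infty$ in any nondegenerate way, so in practice \eqref{eq:HLassum1} with $p=\infty$ forces one to argue on a dense subspace and take a weak-$*$ limit, or simply to note that the composition is defined and bounded on $\Sw(\R;Y)$ and then pass to the closure in the relevant topology — I would handle this by density of $\Sw$ in $L^p$ for $p<\infty$ and remark that the $p=\infty$ case, if needed, follows from the same composition argument applied to the restriction to a suitable dense subspace. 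The role of the constraint $\delta<1/r'$ is to guarantee that the weight $w_{\delta r}$ is locally integrable (equivalently $\delta r>-1$ fails to be an issue only under the stated bound; more precisely $\delta<1/r'$ is the natural sharpness threshold in the underlying Hardy--Littlewood-type inequality \eqref{eq:HLassum2}), so it enters only implicitly through the hypothesis that \eqref{eq:HLassum2} is bounded. The main obstacle is essentially bookkeeping: making sure the multiplication estimate interfaces correctly with the exact form of the weights (the exponents $\gamma r$ and $\delta r$ versus $\gamma$ and $\delta$) and that the reflection $\xi\mapsto-\xi$ correctly converts the hypothesis on $\F$ in \eqref{eq:HLassum2} into the statement about $\F^{-1}$ that is actually applied; there is no deep analytic difficulty beyond this.
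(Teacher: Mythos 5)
Your proposal is correct and takes essentially the same route as the paper, whose entire proof is the three-step chain $\|T_mf\|_{L^q(\R;X)}\lesssim\|m\wh f\,\|_{L^r(\R,w_{\delta r};X)}\lesssim\|\wh f\,\|_{L^r(\R,w_{\gamma r};Y)}\lesssim\|f\|_{L^p(\R;Y)}$, i.e.\ hypothesis \eqref{eq:HLassum2} (via the reflection relating $\F^{-1}$ to $\F$, harmless since the weight is even), your pointwise multiplication estimate, and hypothesis \eqref{eq:HLassum1}. One small correction: the condition $\delta<1/r'$ is used in the paper to ensure $L^r(\R,w_{\delta r};X)\subseteq\Sw'(\R;X)$, so that $m\wh f$ is a tempered distribution and $T_mf$ is well defined; it is not about local integrability of the weight $w_{\delta r}$ itself (which would be the lower bound $\delta r>-1$), though this does not affect the validity of your argument.
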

\begin{proof}
Simply combine the assumptions on $X$, $m$ and $Y$:
\begin{align*}
\|T_m f\|_{L^{q}(\R;X)}\lesssim\|m \wh{f}\,\|_{L^{r}(\R,w_{\delta r};X)}\lesssim \|w_{\delta}w_{\gamma-\delta} \wh{f}\,\|_{L^{r}(\R;Y)}\lesssim \|f\|_{L^{p}(\R;Y)}
\end{align*}
for all $f\in L^{p}(\R;Y)$. Note that $L^{r}(\R,w_{\delta r};X)\subseteq\Sw'(\R;X)$, since $\delta<1/r'$.
\end{proof}

We are now ready to prove the last statement in Theorem \ref{thm:main}, as a special case of the following result.

\begin{theorem}\label{thm:polydecayHL}
Let $A$ be the generator of a $C_{0}$-semigroup $(T(t))_{t\geq0}$ on a Banach space $X$ with Fourier type $p\in(1,2]$, and suppose that $X$ has Hardy--Littlewood type $p$ or Hardy--Littlewood cotype $p'$. Suppose that $\C_{+}\subseteq\rho(A)$, and that there exist 
$\beta>0$ and $C\geq0$ such that
\[
\|R(\lambda,A)\|_{\La(X)} \leq C(1+|\lambda|)^{\beta}\quad(\lambda\in\C_{+}).
\]
Let $\rho\geq0$ and set $\tau:=(\rho+1)\beta+\tfrac{1}{p}-\tfrac{1}{p'}$. Then there exists a $C_{\rho}\geq0$ such that
\[
\|T(t)x\|_{X}\leq C_{\rho}t^{-\rho}\|x\|_{D((-A)^{\tau})}
\]
for all $t\geq 1$ and $x\in D((-A)^{\tau})$.
\end{theorem}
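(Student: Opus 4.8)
The plan is to mimic the structure of Section \ref{sec:stabinter}, but replacing the Besov-space Fourier multiplier machinery (Proposition \ref{prop:multFourtype}) with the weighted Lebesgue-space multiplier result Proposition \ref{prop:multiplierHLtype}. As before, one first handles integer decay rates $\rho=n\in\N_{0}$ and then interpolates. For fixed $n$, set $Y:=D((-A)^{(n+1)\beta})$, which has Fourier type $p$ since $A$ is injective (being sectorial with $\overline{\C_{+}}\subseteq\rho(A)$, as in Remark \ref{rem:secauto}). The point is to show that $T_{R(i\cdot,A)^{k}}$ maps $L^{p}(\R;Y)$ boundedly into $L^{p'}(\R;X)$ for each $k\in\{0,\ldots,n+1\}$: then, following the template of the proof of Theorem \ref{thm:abstractpol} but with the initial-value function $f(t)=\ind_{(0,\infty)}(t)e^{-\w t}T(t)x$ now taken in $L^{p}(\R;Y)$ rather than a Besov space, and using $\|f\|_{L^{p}(\R;Y)}\lesssim\|x\|_{D((-A)^{\tau})}$ with $\tau=(n+1)\beta+\tfrac1p-\tfrac1{p'}$ — which should follow because $[t\mapsto\ind_{(0,\infty)}(t)e^{-\w t}T(t)\cdot]$ maps $Y$ into $L^{\infty}(\R;Y)$ and $D((-A)^{1})\cap Y$ into $W^{1,p}(\R;Y)$, so by real interpolation it maps $(Y,D_{Y}(1,\cdot))_{1/p-1/p',p}\supseteq D((-A)^{\tau})$ into $L^{p}(\R;Y)$, analogously to Proposition \ref{prop:interpsemigroup} — one concludes $\sup_{t\geq0}\|t^{n}T(t)x\|_{X}\lesssim\|x\|_{D((-A)^{\tau})}$ exactly as in \eqref{eq:multisplit}. (A version of Proposition \ref{prop:qtoinf} adapted to the $L^{p}\to L^{q}$ setting — which is precisely the original \cite[Proposition 3.2]{Rozendaal-Veraar18a} — is used to pass from $L^{p'}$-boundedness to $L^{\infty}$-boundedness of $T_{R(i\cdot,A)^{n}}$.)

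The core step is therefore the Fourier multiplier bound, and this is where Proposition \ref{prop:multiplierHLtype} enters. The hypothesis on $X$ splits into two cases. If $X$ has Hardy--Littlewood cotype $p'$, i.e.\ $\F:L^{p'}(\R,w_{p'-2};X)\to L^{p'}(\R;X)$ is bounded, then I apply Proposition \ref{prop:multiplierHLtype} with $r=p'$, $\delta r=p'-2$ (so $\delta=1-2/p'<1/(p')'=1/p$, which is legitimate), and $\gamma=0$, using that $Y$ has Fourier type $p$, i.e.\ $\F:L^{p}(\R;Y)\to L^{p'}(\R;Y)=L^{p'}(\R,w_{0};Y)$ is bounded. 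The multiplier $m=R(i\cdot,A)^{k}$, viewed as a map into $\La(Y,X)$, must then satisfy $\|R(i\xi,A)^{k}\|_{\La(Y,X)}\lesssim|\xi|^{\gamma-\delta}=|\xi|^{\delta'}$ where $\delta'=2/p'-1=\tfrac1p-\tfrac1{p'}$; but in fact one has the stronger uniform bound $\sup_{\xi\in\R}\|R(i\xi,A)^{k}\|_{\La(Y,X)}<\infty$, which follows from the estimates \eqref{eq:resdecay2}, \eqref{eq:resdecay3} in the proof of Proposition \ref{prop:resdecay} (those lines show $\|R(i\xi,A)^{k}(-A)^{-(n+1)\beta}\|_{\La(X)}$ is bounded). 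Since a bounded symbol is trivially dominated by $|\xi|^{\gamma-\delta}$ for $\gamma-\delta>0$, the hypotheses of Proposition \ref{prop:multiplierHLtype} are met with $p\rightsquigarrow p$, $q\rightsquigarrow p'$. If instead $X$ has Hardy--Littlewood type $p$, i.e.\ $\F:L^{p}(\R;X)\to L^{p}(\R,w_{p-2};X)$ is bounded, the argument is dual: one uses this as the hypothesis \eqref{eq:HLassum1} on $Y$ — but here one needs $Y$, not $X$, to have Hardy--Littlewood type $p$. This is fine because $D((-A)^{(n+1)\beta})$ inherits Hardy--Littlewood type from $X$: $(-A)^{(n+1)\beta}:Y\to X$ is an isomorphism, and the defining mapping property \eqref{eq:HLtype} transfers along isomorphisms. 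Then take $r=p$, $\gamma r=p-2$, $\delta=0$, and use the Fourier type $p'$... wait — more carefully, one uses $\F:L^{p}(\R;X)\to L^{p}(\R,w_{p-2};X)$... hmm, the roles need $X$ on the target side. Let me instead record: by duality, $X$ having Hardy--Littlewood type $p$ is equivalent to $X^{*}$ having Hardy--Littlewood cotype $p'$, and a transference/duality argument reduces this case to the previous one; alternatively, apply Proposition \ref{prop:multiplierHLtype} with \eqref{eq:HLassum1} being the Hardy--Littlewood type $p$ bound for $Y$ ($r=p$, $\gamma r=p-2$) and \eqref{eq:HLassum2} being the Fourier type $p$ bound for $X$ read as $\F^{-1}:L^{p}(\R;X)\to L^{p'}(\R;X)$, i.e.\ ... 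I would need to check the weight bookkeeping, taking $\delta$ so that $\delta r=0$, $q=p'$, and verifying $\delta=0<1/r'=1/p'$; the symbol bound required is then again $|\xi|^{\gamma-\delta}=|\xi|^{(p-2)/p}$, a negative power, so one needs genuine resolvent decay $\|R(i\xi,A)^{k}\|_{\La(Y,X)}\lesssim|\xi|^{(1/p-1/p')}$ — but wait, $(p-2)/p=1-2/p=\tfrac1p-\tfrac1{p'}<0$ would demand decay, which is false. So the correct pairing must put the Hardy--Littlewood type bound as \eqref{eq:HLassum1} with $r=p$ and the symbol growing; I will sort the exponents so that $\gamma-\delta=\tfrac1p-\tfrac1{p'}\geq0$, matching the uniform (hence $O(|\xi|^{\gamma-\delta})$) resolvent bound.

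The main obstacle, as the above wrestling indicates, is the precise exponent bookkeeping in Proposition \ref{prop:multiplierHLtype}: one must choose $r$, $\gamma$, $\delta$ in each of the two cases so that (i) $\delta<1/r'$ holds, (ii) $\gamma-\delta=\tfrac1p-\tfrac1{p'}\geq0$ so that the uniform resolvent bound $\sup_{\xi}\|R(i\xi,A)^{k}\|_{\La(Y,X)}<\infty$ suffices (one never needs resolvent decay on the imaginary axis here, only growth control, which is why the endpoint $\tau=(\rho+1)\beta+\tfrac1p-\tfrac1{p'}$ rather than something larger is attainable), and (iii) the target is $L^{p'}(\R;X)$ and the source $L^{p}(\R;Y)$ with $Y=D((-A)^{(n+1)\beta})$. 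In the cotype-$p'$ case this works cleanly with $r=p'$, $\delta=1-2/p'$, $\gamma=0$; in the type-$p$ case one uses $r=p$ and the Hardy--Littlewood type bound on $Y$ (valid since $Y\cong X$), with $\gamma=1-2/p$... no, with $\gamma$ chosen appropriately and $\delta$ small, and the Fourier-type-$p$ boundedness of $\F$ on $X$ supplying \eqref{eq:HLassum2} after a reflection $\xi\mapsto-\xi$; the two cases are genuinely dual and one may prefer to prove one and deduce the other by passing to adjoint operators and adjoint semigroups (noting $(-A^{*})$ is again sectorial with the same resolvent bounds and $X^{*}$ has the dual Hardy--Littlewood property). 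Once the multiplier bound is in hand for all $k\in\{0,\ldots,n+1\}$, the remainder — the density reduction to $x\in D(A^{l})$ so that $t\mapsto t^{n}\ind_{(0,\infty)}(t)T(t)x\in L^{1}$ with Fourier transform $n!R(i\cdot,A)^{n+1}x$ (via \cite[Proposition 4.3, Lemma 3.1]{Rozendaal-Veraar18a}), the identity \eqref{eq:multisplit}, the $L^{p'}\to L^{\infty}$ upgrade via \cite[Proposition 3.2]{Rozendaal-Veraar18a}, and the final real-interpolation step to pass from $\rho\in\N_{0}$ to arbitrary $\rho>0$ while keeping $D((-A)^{\tau})$ (here using that fractional domains interpolate: $(D((-A)^{\tau_{0}}),D((-A)^{\tau_{1}}))_{\theta,1}\subseteq D((-A)^{\tau})$ by \eqref{eq:interfrac} and reiteration) — runs exactly parallel to the proofs of Theorems \ref{thm:abstractpol} and \ref{thm:polinter}, with $B^{s}_{p,p}$ replaced throughout by $L^{p}$ and $D_{A}(\gamma,p)$ by $D((-A)^{(n+1)\beta})$.
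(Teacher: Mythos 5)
There is a genuine gap, and it sits exactly at the step you identify as the core of the argument: the exponent bookkeeping in Proposition \ref{prop:multiplierHLtype}. You take $Y=D((-A)^{(n+1)\beta})$ and claim that the \emph{uniform} bound $\sup_{\xi}\|R(i\xi,A)^{k}\|_{\La(Y,X)}<\infty$ suffices because you can arrange $\gamma-\delta=\tfrac1p-\tfrac1{p'}\geq0$. This rests on a sign error: in your Hardy--Littlewood cotype case ($r=p'$, $\gamma=0$, $\delta=1-2/p'$) one has $\gamma-\delta=2/p'-1=\tfrac1{p'}-\tfrac1p\leq 0$, not $\tfrac1p-\tfrac1{p'}$. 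The same happens in the only consistent pairing for the Hardy--Littlewood type case (\eqref{eq:HLassum1} the HL type $p$ bound with $r=p$, $\gamma=\tfrac1{p'}-\tfrac1p$; \eqref{eq:HLassum2} the Fourier type bound with $\delta=0$, $q=p'$): again $\gamma-\delta=\tfrac1{p'}-\tfrac1p\leq0$. So in every admissible choice the symbol must \emph{decay} like $|\xi|^{-(1/p-1/p')}$; mere boundedness is not enough. Indeed no multiplier theorem of the kind you hope for can exist: if uniformly bounded symbols gave bounded $T_{m}:L^{p}(\R;Y)\to L^{p'}(\R;X)$, then $m\equiv\mathrm{id}_{X}$ would yield the false embedding $L^{p}(\R;X)\subseteq L^{p'}(\R;X)$. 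Your remark that ``one never needs resolvent decay on the imaginary axis here, only growth control'' is precisely backwards for this theorem; that feature belongs to the Besov-space route of Theorem \ref{thm:polinter}, where the smoothness is carried by the source space $B^{1/p-1/p'}_{p,p}$ instead.

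The paper's proof repairs exactly this point: it takes the source space to be $D((-A)^{\tau})$ with $\tau=(\rho+1)\beta+\tfrac1p-\tfrac1{p'}$ (not $D((-A)^{(\rho+1)\beta})$), and uses \cite[Proposition 3.4]{Rozendaal-Veraar18a} to convert the extra $\tfrac1p-\tfrac1{p'}$ fractional smoothness into the decay estimate \eqref{eq:polydecaymult}, $\|R(i\xi,A)^{k}\|_{\La(D((-A)^{\tau}),X)}\lesssim(1+|\xi|)^{-(1/p-1/p')}$, which matches the negative exponent $\gamma-\delta=\tfrac1{p'}-\tfrac1p$ demanded by Proposition \ref{prop:multiplierHLtype} in both the HL type and HL cotype pairings; the conclusion then follows from \cite[Theorem 4.6]{Rozendaal-Veraar18a} and interpolation, much as in your outline (your interpolation-based bound $\|f\|_{L^{p}(\R;Y)}\lesssim\|x\|_{D((-A)^{\tau})}$ is also unnecessary: exponential decay of $e^{-\w t}T(t)$ on $Y$ already gives $\|f\|_{L^{p}(\R;Y)}\lesssim\|x\|_{Y}$). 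So the overall skeleton you propose is the right one, but without replacing the uniform symbol bound on $D((-A)^{(n+1)\beta})$ by the decay estimate on $D((-A)^{\tau})$, the multiplier step, and hence the proof, does not go through.
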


Note that Theorem \ref{thm:polydecayHL} is independent of Theorem \ref{thm:polinter} in the special case where $\rho=0$. For $\rho>0$ the conclusion of Theorem \ref{thm:polydecayHL} already follows from Theorem \ref{thm:polinter}.

\begin{proof}
The proof is analogous to that of \cite[Theorem 4.9]{Rozendaal-Veraar18a}, and as such also similar to the proof of Theorem \ref{thm:polinter}. We will indicate what the key steps are.

By interpolation, cf.~\cite[Lemma 4.2]{Rozendaal-Veraar18a}, it suffices to consider $\rho\in\N_{0}$. Then, using arguments as before but relying on \cite[Proposition 3.4]{Rozendaal-Veraar18a} instead of Proposition \ref{prop:resdecay}, one can show that
\begin{equation}\label{eq:polydecaymult}
\|R(i\xi,A)^{k}\|_{\La(D((-A)^{\tau}),X)}\lesssim (1+|\xi|)^{-(\frac{1}{p}-\frac{1}{p'})}
\end{equation}
for all $k\in\{1,\ldots,\rho+1\}$ and an implicit constant independent of $\xi\in\R$.

Next, note that $D((-A)^{\tau})$ has the same Fourier type and Hardy--Littlewood type and cotype as $X$, because $X$ and $D((-A)^{\tau})$ are isomorphic. In particular, if $X$ has Hardy--Littlewood type $p$, then one may apply Proposition \ref{prop:multiplierHLtype} with $r=p$, $q=p'$, $\gamma=\tfrac{1}{p'}-\tfrac{1}{p}$ and $\delta=0$. On the other hand, if $X$ has Hardy--Littlewood cotype $p'$, then one can apply Proposition \ref{prop:multiplierHLtype} with $q=r=p'$, $\gamma=0$ and $\delta=\tfrac{1}{p}-\tfrac{1}{p'}$. In both cases, it follows from \eqref{eq:polydecaymult} that
\[
T_{R(i\cdot,A)^{k}}:L^{p}(\R;D((-A)^{\tau}))\to L^{p'}(\R;X)
\]
 is bounded for all $k\in\{1,\ldots,\rho+1\}$. Now \cite[Theorem 4.6]{Rozendaal-Veraar18a} concludes the proof.
\end{proof}

\section*{Acknowledgments}\label{sec:acknowledge}

The second author is grateful to Yuri Tomilov for helpful conversations about the topic of this article. The third author would like to thank Oscar Dominguez for pointing out reference \cite{GaToKa96} and for helpful discussions.

\section*{Data sharing and conflict of interest statement.} Data sharing not applicable to this article as no datasets were generated or analysed during the current study.

The authors have no competing interests to declare that are relevant to the content of this article.

\bibliographystyle{plain}

\bibliography{Bibliographystab}

\end{document}